\theoremstyle{plain} 
\newtheorem{theorem}{\indent\sc Theorem}[section] 
\newtheorem{lemma}[theorem]{\indent\sc Lemma}
\newtheorem{corollary}[theorem]{\indent\sc Corollary}
\newtheorem{proposition}[theorem]{\indent\sc Proposition}
\theoremstyle{definition} 
\newtheorem{definition}[theorem]{\indent\sc Definition}
\newtheorem{remark}[theorem]{\indent\sc Remark}
\newtheorem{example}[theorem]{\indent\sc Example}
\newtheorem*{theorem1}{\indent\sc Theorem A}
\newtheorem*{theorem2}{\indent\sc Theorem B}
\newtheorem*{case1}{\indent\sc $\underline{(\mathrm{I})\;n\geqslant4}$}
\newtheorem*{case2}{\indent\sc $\underline{(\mathrm{I}\hspace{-.1em}\mathrm{I})\;n=3}$}
\newtheorem*{case3}{\indent\sc $\underline{(\mathrm{I}\hspace{-.1em}\mathrm{I}\hspace{-.1em}\mathrm{I})\;n=2}$}
\begin{document}

\title[Rank 2 almost Fano bundles on $\mathbb{P}^n$]{On the classification of rank 2 almost Fano bundles on projective space} 

\author[K. Yasutake]{Kazunori Yasutake} 

%%%%%%%%%%%%%%%%%%% ‹r' %%%%%%%%%%%%%%%%%%%
\subjclass[2000]{ %2000 MSC numbers
Primary 14J40; Secondary 14J10, 14J45, 14J60.
}

\keywords{ 
Projective manifold, anticanonical class, almost Fano variety, vector bundle.
}
%%%%%%%%%%%% '˜ŽÒŠ'® %%%%%%%%%%%%%
\address{
Graduate School of Mathematical Sciences \endgraf
Kyushu University \endgraf
Fukuoka 819-0395 \endgraf
Japan
}
\email{k-yasutake@math.kyushu-u.ac.jp}
%%%%%%%%%%%%%%%%%%%%%%%%%%%%%%%%%%%%%%%%%

\maketitle

\begin{abstract}
An almost Fano bundle is a vector bundle on a smooth projective variety that its projectivization is an almost Fano variety. 
In this paper, we prove that almost Fano bundles exist only on almost Fano manifolds and study $\mathrm{rank}$ $2$ almost Fano bundles over projective spaces.
\end{abstract}

\section*{Introduction}
An almost Fano variety is a smooth projective variety whose anti-canonical line bundle is nef and big. 
This is a natural generalization of Fano varieties and often appears in the study of deformation of a Fano variety $($\cite{min}, \cite{sat}$)$. 
Almost Fano surfaces were completely classified by Demazure \cite{dem}. 
In \cite{lan}, Langer introduced the notion of weak Fano sheaves and classified almost  
Fano threefolds which is isomorphic to the projectivization of rank 2 sheaf 
$\mathcal{F}$ on $\mathbb{P}^2$ with $c_1(\mathcal{F})=-1$.
Recently Jahnke, Peternel and Radloff classified almost Fano threefolds with Picard number 2 
whose pluri-anti-canonical morphism is divisorial in \cite{j-p-r}. 
In \cite{take}, Takeuchi studied almost Fano threefolds with del Pezzo fibration structure 
whose pluri-anti-canonical morphism is small. 
In higher dimensional case, Jahnke and Peternell classified almost del Pezzo varieties, which are almost Fano $n$-folds with index $n-1$ i.e. its anti-canonical line bundle is divisible by $n-1$ in the Picard group. 

The aim of this paper is to study ruled almost Fano varieties $M$ of dimension $n\geqslant3$ over nonsingular variety $S$ i.e. there is a vector bundle $\mathcal{E}$ on $S$ such that $M$ is isomorphic to its projectivization $\mathbb{P}_S(\mathcal{E})$.  
\\

Now we recall the notion of almost Fano bundle, originally introduced by Langer as weak Fano bundle in \cite{lan}.

\begin{definition}
Let $\mathcal{E}$ be a vector bundle  on a smooth complex projective variety $M$. 
We say that  $\mathcal{E}$ is almost Fano if its projectivization $\mathbb{P}_M(\mathcal{E})$ is an almost Fano variety.
\end{definition}

Such bundles always exist on an almost Fano variety $M$. 
In fact, we notice that the trivial $\mathrm{rank}$ $r$ vector bundle is almost Fano since $\mathbb{P}_M(\mathcal{O}_M^{\oplus r})\cong M\times \mathbb{P}^{r-1}$ is also an almost Fano variety.  
In \cite[Theorem 1.6]{s-w3}, it is shown that Fano bundles are only on Fano manifolds.  
We consider the almost Fano case and obtain the following theorem.  

\begin{theorem1}
If $\mathcal{E}$ is an almost Fano bundle over a smooth complex projective variety $M$, then $M$ is an almost Fano variety.
\end{theorem1}

On projective spaces, $\mathrm{rank}$ 2 Fano bundles are completely classified in  \cite{a-p-w}, \cite{s-w3} and \cite{s-w2}. 
Using their methods, we study the classification of  $\mathrm{rank}$ $2$ almost Fano bundles on projective spaces and have the list mentioned below. 

\begin{theorem2}
Let $\mathcal{E}$ be a $\mathrm{rank}$ $2$ normalized $($i.e. $c_1(\mathcal{E})=0$ or $-1)$ almost Fano bundle on $\mathbb{P}^n$. 
Assume that $\mathcal{E}$ is not Fano. 
Then, $\mathcal{E}$ is isomorphic to one of the following :\\
$(1)$ $\mathcal{O}_{\mathbb{P}^n}(\lfloor \frac{n}{2}\rfloor) \oplus \mathcal{O}_{\mathbb{P}^n}(\lfloor -\frac{n}{2}\rfloor)$, where $\lfloor \frac{n}{2}\rfloor$ is the largest integer $\leqslant \frac{n}{2}$.\\
$(2)$ a stable bundle on $\mathbb{P}^3$ with $c_1=0$, $c_2=2$.\\
$(3)$ a stable bundle on $\mathbb{P}^3$ with $c_1=0$, $c_2=3$.\\
$(4)$ a vector bundle on $\mathbb{P}^2$ determined by the exact sequence : 
$0 \rightarrow \mathcal{O}_{\mathbb{P}^2} \rightarrow \mathcal{E} \rightarrow \mathcal{I}_p(-1) \rightarrow 0$, where $\mathcal{I}_p$ is the ideal sheaf of a point $p$.\\
$(5)$ a stable bundle on $\mathbb{P}^2$ with $c_1=-1$, $2 \leqslant c_2 \leqslant 5$.\\
$(6)$ a stable bundle on $\mathbb{P}^2$ with $c_1=0$, $4 \leqslant c_2 \leqslant 6$.
\end{theorem2}

Moreover, we show that all cases stated above really exist, except the case when $c_2=6$ in $(6)$. 
Note that these varieties are of index 1 or 2. 
On three dimensional projective space, the most difficult part is a construction of almost Fano bundles satisfying the condition in (3). 
To obtain this, we use Maruyama's theory of elementary transformation of vector bundles.
On projective plane, the case $c_1=-1$ was classified originally in \cite{lan} and later independently in \cite{j-p}. 
Therefore we treat the case $c_1=0$ i.e. $\mathbb{P}(\mathcal{E})$ is of index 1. 
In particular, we study almost Fano threefolds of index 1 whose pluri-anti-canonical morphism is small, having $\mathbb{P}^1$-bundle structure over $\mathbb{P}^2$. 

Ruled varieties play an important role in the classification theory of projective varieties.
So we may expect that our results also have applications.
\section*{Acknowledgements}
The author would like to express his gratitude to his supervisor Professor Eiichi Sato for many useful discussions and warm encouragements. He would also like to thank Professor Hiromichi Takagi for helpful advice, Professor Shunsuke Takagi and Professor Yu Kawakami for useful comments on an earlier draft of this paper.%%%%%%%%%%%%%
\section*{Notation}
Throughout this paper $\mathcal{E}$ is a vector bundle on a smooth complex projective variety $M$ and $\xi_{\mathcal{E}}$ is the tautological line bundle on $X=\mathbb{P}_M(\mathcal{E})$. 
By $\pi$ we denote the projection $\pi: \mathbb{P}_M(\mathcal{E})\rightarrow M$ and by $H$ the pull-back of hyperplane if $M=\mathbb{P}^n$ $($i.e.
$\mathcal{O}_{\mathbb{P}_M(\mathcal{E})}(H)\cong\pi^*\mathcal{O}_{\mathbb{P}^n}(1)$). 
For a curve $C$ in $M$, we denote by $[C]$ the numerical equivalence class of $C$ in $M$. 

\section{Proof of Theorem A}
In this section we prove Theorem $A$. 
Before starting the proof, we prepare some facts.

\begin{definition}
Let $X$ be a normal projective variety and $\Delta$ an effective $\mathbb{Q}$-divisor on $X$. 
Let $\varphi : Y \rightarrow X$ be a log resolution of $(X,\Delta)$. 
We set 
\[K_Y=\varphi^*(K_X+\Delta)+\sum a_iE_i, \]
where $E_i$ is a prime divisor. 
The pair $(X,\Delta)$ is called kawamata log terminal (klt, for short) if $a_i>-1$ for all i.
\end{definition}

\begin{definition}
Let $X$ be a normal projective variety and $\Delta$ an effective $\mathbb{Q}$-divisor on $X$. 
We say that the pair $(X,\Delta)$ is a log Fano variety if $(X,\Delta)$ is klt and $-(K_X+\Delta)$ is an ample $\mathbb{Q}$-divisor.
\end{definition}

\begin{lemma}
If $X$ is an almost Fano manifold, there is an effective $\mathbb{Q}$-divisor $\Delta$ such that $(X,\Delta)$ is a log Fano variety.
\end{lemma}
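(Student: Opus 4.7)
The plan is to apply Kodaira's lemma to $-K_X$ and then take a sufficiently small positive multiple of the resulting effective part as $\Delta$. Since $-K_X$ is nef and big, Kodaira's lemma supplies an ample $\mathbb{Q}$-divisor $A$ and an effective $\mathbb{Q}$-divisor $D$ on $X$ with $-K_X \sim_{\mathbb{Q}} A + D$. I would then set $\Delta := \varepsilon D$ for a positive rational $\varepsilon < 1$ to be chosen later. The ampleness of $-(K_X+\Delta)$ is automatic from the identity
\[
-(K_X+\Delta) \;=\; -K_X - \varepsilon D \;\sim_{\mathbb{Q}}\; (1-\varepsilon)(-K_X) + \varepsilon A,
\]
which expresses $-(K_X+\Delta)$ as the sum of a nef and an ample $\mathbb{Q}$-divisor, hence ample for every $\varepsilon \in (0,1)$.

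To force $(X,\Delta)$ to be klt, I would take a log resolution $\varphi : Y \to X$ of $(X,D)$ and read off the discrepancies of $(X,\varepsilon D)$ on $Y$. Since $X$ is smooth and therefore terminal, in the decomposition $K_Y = \varphi^* K_X + \sum c_i E_i$ the exceptional discrepancies $c_i$ are all nonnegative, and writing $\varphi^* D = \widetilde{D} + \sum d_i E_i$ with $\widetilde{D}$ the strict transform gives
\[
K_Y \;=\; \varphi^*(K_X + \varepsilon D) + \sum (c_i - \varepsilon d_i) E_i - \varepsilon \widetilde{D}.
\]
Every coefficient on the right is affine-linear in $\varepsilon$ with bounded slope, so all of them exceed $-1$ whenever $\varepsilon > 0$ is sufficiently small, which means $(X,\varepsilon D)$ is klt.

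The main subtlety is the tension between the two requirements: klt pulls $\varepsilon$ toward $0$, while ampleness needs $\varepsilon > 0$. The formula above resolves this cleanly, since ampleness persists on the entire interval $\varepsilon \in (0,1)$ while klt holds below an explicit threshold determined by the log resolution; any sufficiently small positive rational $\varepsilon$ therefore yields the desired boundary. The lemma accordingly reduces to this perturbation argument, whose only nontrivial ingredient is Kodaira's lemma—and this is precisely where the bigness of $-K_X$, rather than only its nefness, is essential.
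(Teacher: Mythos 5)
Your proposal is correct and follows essentially the same route as the paper: Kodaira's lemma applied to the nef and big divisor $-K_X$ gives $-mK_X=A+E$ with $A$ ample and $E$ effective, and a sufficiently small rational multiple of $E$ serves as $\Delta$, with ampleness of $-(K_X+\Delta)$ coming from the same nef-plus-ample decomposition. The only cosmetic difference is that you verify the klt condition directly on a log resolution, whereas the paper cites \cite[Corollary 2.35]{k-m} for exactly that perturbation statement.
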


\begin{proof}
For any ample divisor $A$, there are an integer m and an effective divisor $E$ such that $-mK_X=A+E$ by \cite[Lemma 2.60]{k-m}. 
Put $\Delta=\frac{1}{l}E$ for $l\gg0$, then $(X,\Delta)$ is klt from \cite{k-m}, corollary 2.35 and \[-lm(K_X+\Delta)=m(l-m)(-K_X)+mA\] is ample.
\end{proof}

Using this lemma, we get the following results by \cite{k-m} and \cite{z}.

\begin{theorem}\label{thm}
Let $X$ be an almost Fano manifold. 
Then, 
\\$(1)($Basepoint-free Theorem$)$  

Any nef divisor $D$ on $X$ is semiample $($i.e. bD is basepoint free for $b\gg0)$. 
\\$(2)($Cone Theorem$)$

There are finitely many rational curves $C_j$ on $X$ such that 
\[\overline{NE(X)}=\displaystyle \sum_{finite}\mathbb{R}_{\geqslant 0}[C_j].\] 
$(3)$ $X$ is rationally connected i.e. for any two points in $X$ there exists a rational curve which passes through them.
\end{theorem}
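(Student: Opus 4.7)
The plan is to deduce all three assertions by combining the preceding lemma with the klt versions of the basepoint-free and cone theorems, together with Zhang's theorem on rational connectedness. Fix the effective $\mathbb{Q}$-divisor $\Delta$ supplied by the lemma above, so that $(X,\Delta)$ is a klt log Fano pair with $-(K_X+\Delta)$ ample; the point is that almost Fano by itself gives nef and big, while the lemma upgrades this to the strictly ample log-canonical class needed to apply the standard log MMP machinery of \cite{k-m}.

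For (1), I would invoke the log basepoint-free theorem (\cite{k-m}, Theorem 3.3): if $(X,\Delta)$ is klt and $D$ is a nef Cartier divisor such that $aD-(K_X+\Delta)$ is nef and big for some $a>0$, then $bD$ is basepoint free for $b\gg 0$. The hypothesis is met for any nef $D$ already with $a=1$, since $D-(K_X+\Delta)$ is the sum of a nef and an ample divisor, hence ample, and in particular nef and big. This gives (1).

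For (2), I would apply the log cone theorem (\cite{k-m}, Theorem 3.7), which yields
\[
\overline{NE(X)}=\overline{NE(X)}_{(K_X+\Delta)\geqslant 0}+\sum\mathbb{R}_{\geqslant 0}[C_j],
\]
the $C_j$ being rational curves spanning $(K_X+\Delta)$-negative extremal rays, locally finite in the open half-space $\{(K_X+\Delta)<0\}$. Ampleness of $-(K_X+\Delta)$ forces every nonzero class of $\overline{NE(X)}$ into this half-space, so the first summand reduces to the origin. Slicing the cone by the ample hyperplane $\{-(K_X+\Delta)=1\}$ gives a compact cross-section, so local finiteness upgrades to global finiteness, proving (2).

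For (3), I would cite Zhang's theorem (\cite{z}) that every klt log Fano variety is rationally connected, applied directly to the pair $(X,\Delta)$. Overall I anticipate no genuine obstacle: the argument is a formal reduction, and the only small subtlety lies in observing that the ampleness of $-(K_X+\Delta)$ is precisely what promotes the "nef and big" hypothesis of the basepoint-free theorem and the "locally finite" conclusion of the cone theorem into their stronger forms claimed in the statement.
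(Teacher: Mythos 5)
Your proposal is correct and follows exactly the paper's route: the paper derives this theorem in one line by combining the preceding lemma (producing a klt log Fano pair $(X,\Delta)$) with the klt basepoint-free and cone theorems of \cite{k-m} and Zhang's rational connectedness result \cite{z}. You have simply filled in the standard details (ampleness of $D-(K_X+\Delta)$, compactness of the cross-section forcing finiteness of the extremal rays) that the paper leaves implicit.
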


The next lemma is also needed.

\begin{lemma} $({\rm c.f.}$ \cite[Lemma 3.3]{wis}$)$. \label{wis}
Let $\pi :X=\mathbb{P}_M(\mathcal{E})\rightarrow M$ be the projectivization of a $\mathrm{rank}$ $r$ almost Fano bundle $\mathcal{E}$ and $C$  an extremal rational curve on 
$X$ not contracted by $\pi$. 
Then, we have $0\leqslant -K_X.C\leq -K_M.\pi (C)$.
\end{lemma}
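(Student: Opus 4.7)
The left inequality is immediate: since $X = \mathbb{P}_M(\mathcal{E})$ is almost Fano, $-K_X$ is nef and thus has non-negative intersection with every curve.

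For the right inequality, the plan is to adapt Wi\'sniewski's argument in \cite[Lem.~3.3]{wis}. The relative canonical formula gives $K_X = \pi^*K_M + K_{X/M}$ with $K_{X/M} = \pi^*\det\mathcal{E} - r\xi_\mathcal{E}$. Setting $C':=\pi(C)$ and $d:=\deg(\pi|_C\colon C\to C')\geq 1$, the projection formula yields $\pi^*(-K_M)\cdot C = d\,(-K_M\cdot C')$, and the desired bound $-K_X\cdot C\leq -K_M\cdot C'$ rearranges to
\[ (d-1)\,(-K_M\cdot C') \;\leq\; K_{X/M}\cdot C. \]
Since $-K_M$ is nef on $M$ by Theorem~A, the left side is non-negative, so it remains to bound $K_{X/M}\cdot C$ from below.

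For this step, I would pass to the normalization $\nu\colon \tilde{C}'\to C'$. Because $C$ is rational and not contracted by $\pi$, $\tilde{C}'\cong \mathbb{P}^1$, and Grothendieck's theorem splits $\nu^*(\mathcal{E}|_{C'})\cong \bigoplus_{i=1}^r \mathcal{O}(a_i)$ with $a_1\geq\cdots\geq a_r$. The preimage of $C$ is a multisection of degree $d$ of the pullback $\mathbb{P}^{r-1}$-bundle over $\mathbb{P}^1$, and the section $\sigma_{\min}$ coming from the quotient $\bigoplus\mathcal{O}(a_i)\twoheadrightarrow \mathcal{O}(a_r)$ realizes the minimum $\xi_\mathcal{E}$-degree: it satisfies $\xi_\mathcal{E}\cdot\sigma_{\min} = a_r$ and $K_{X/M}\cdot\sigma_{\min} = \sum_i a_i - r a_r = \sum_{i<r}(a_i - a_r)\geq 0$. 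Invoking the Cone Theorem~\ref{thm}(2) for $X$ together with Mori's bend-and-break, I would reduce the class $[C]$ to (a positive multiple of) $[\sigma_{\min}]$ within its extremal ray, so that $K_{X/M}\cdot C\geq 0$ and the case $d=1$ follows.

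The main obstacle is this final reduction step. Wi\'sniewski's original argument uses ampleness of $-K_X$ to control the loci swept out by deformations of $C$, but here $-K_X$ is only nef and big; additional care is thus needed to ensure the extremal representative of $[C]$ lifts to a genuine minimal section, and to handle the multisection case $d\geq 2$ by bend-and-break or by a direct intersection estimate on the split bundle.
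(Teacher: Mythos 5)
Your proof of the left inequality is fine: $-K_X$ is nef because $X$ is almost Fano, so $-K_X.C\geqslant 0$. For the right inequality, however, your argument is incomplete at exactly the point you flag, and it takes a different (and harder) route than the paper. The paper does not rederive Wi\'sniewski's estimate at all: it observes that, since $X$ is almost Fano, the Cone and Basepoint-free Theorems (Theorem \ref{thm}) furnish the elementary contraction $\varphi_C$ of the extremal ray $\mathbb{R}_{\geqslant 0}[C]$, checks that this contraction satisfies the hypotheses of \cite[Lemma 3.3]{wis}, and then quotes that lemma as a black box. Your plan instead tries to reprove the estimate from the splitting of $\mathcal{E}$ on the normalization of $\pi(C)$, and the decisive step --- replacing the extremal class $[C]$ by (a multiple of) the minimal section $\sigma_{\min}$ inside its own ray, and then treating the multisection case $d\geqslant 2$ --- is precisely what is missing. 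Bend-and-break produces rational curves of lower degree whose classes lie somewhere in the effective cone, but it does not show that the ray $\mathbb{R}_{\geqslant 0}[C]$ itself contains a curve mapping isomorphically onto $\pi(C)$, and for $d\geqslant 2$ you would need the strictly stronger bound $K_{X/M}.C\geqslant (d-1)(-K_M.\pi(C))$, not merely $K_{X/M}.C\geqslant 0$. So as written the argument establishes nothing beyond a conditional version of the case $d=1$.

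There is also a circularity to repair: you invoke Theorem A to assert that $-K_M$ is nef, but in the paper this lemma is an ingredient in the proof of Theorem A --- it is used there precisely to deduce the nefness of $-K_M$ from $-K_M.\pi(C_i)\geqslant -K_X.C_i\geqslant 0$. Any proof of the present lemma must therefore avoid assuming that $M$ is almost Fano. The clean fix is the paper's: establish that the extremal contraction $\varphi_C$ exists (this uses only that $X$ is almost Fano, via Theorem \ref{thm}) and appeal to \cite[Lemma 3.3]{wis}, whose proof compares the fibers of $\varphi_C$ with those of $\pi$ rather than working on the base.
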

\begin{proof}
Let $C$ be an extremal rational curve on $X$ not contracted by $\pi$ and $\varphi_C$ the corresponding elementary contraction map. 
Then $\varphi$ satisfies the assumption in  \cite[Lemma 3.3]{wis}. 
Hence we obtain the inequality in the lemma.
\end{proof}
\begin{proof}[Proof of Theorem A]
Put $X=\mathbb{P}_M(\mathcal{E})$. 
From Theorem \ref{thm}, we can find finitely many extremal rational curves $C_0$, $C_1$, $\cdots$, $C_{\rho}$ in X which generate the Kleiman-Mori cone $\overline{NE(X)}$. 
Let $C_0$ be contained in a fiber of the projection $\pi$. 
Then we see that $\overline{NE(M)}=\displaystyle \sum^{\rho }_{i=1}\mathbb{R}_{\geq 0}[\pi (C_i)]$. 
From Lemma \ref{wis}, it follows that \[-K_M.\pi (C_i)\geq -K_X.C_i\geq 0\] for $1\leqslant i\leqslant \rho$. 
Therefore $-K_M$ is nef.
Next we show the bigness of $-K_M$. 
Applying Theorem \ref{thm} (1) to $D:=\pi^*(-K_M)$, we know $D$ is semiample. Because $\pi$ is projective space bundle, $-K_M$ is also semiample. 
Let $\varphi=\varphi_{| -lK_M|}:M\rightarrow W$ be a morphism induced by $-lK_M$ for $l\gg0$. 
Suppose that $\mathrm{dim}M>\mathrm{dim}W$. 
Take the Stein factorization, we may assume a fiber of $\varphi$ is connected. 
We denote its general fiber by $F$. 
Then $F$ is smooth and we see that $-K_M|_F=-K_F$ holds. 
From this, 
\[-K_X|_{\pi^{-1}(F)}=(r\xi_{\mathcal{E}}-\pi^*(K_M+c_1(\mathcal{E})))|_{\pi^{-1}(F)}\]
\[\hspace{39.0mm}=r\xi_{\mathcal{E}|_F}-\pi^*(K_F+c_1(\mathcal{E}|_F))=-K_{\mathbb{P}_F(\mathcal{E}|_F)}.\] 
Therefore we may only consider $\varphi(M)$ is a point. 
In this case, Kodaira dimension $\kappa(M)$ of $M$ is equal to 0. 
On the other hand, $X$ is rationally connected due to Theorem \ref{thm} $(3)$. Since $\pi$ is surjective, $M$ is also rationally connected. 
Hence we have $\kappa(M)=-\infty$. 
This is a contradiction.
\end{proof}

\begin{remark}
$(1)$ This theorem is proved in \cite{c-j-r} if $\mathrm{dim}X=2$ and $\mathrm{rank}\mathcal{E}=2$.

$(2)$ Recently Fujino and Gongyo prove if $X$ is almost Fano and $f:X\rightarrow Y$ is a smooth morphism, then $Y$ should be almost Fano \cite{f-g}. 
\end{remark}
%%%%%%%%%%%%%%%%%%%%%%%%%%%%%%%%%%%%%%%%%%%%%%%%%%%%%%%%%%%%%%%%%%%%%%%%%%%%%%%%%%%
\section{Proof of Theorem B}
In this section, we study the structure of almost Fano bundles on projective space. 

First we consider almost Fano bundles which are decomposed into a direct sum of line bundles. 
In this case, we can characterize almost Fano bundles for any $\mathrm{rank}$.
\begin{proposition}\label{split}
Let $\mathcal{E} \cong \mathcal{O}\oplus \mathcal{O}(a_1)\oplus \mathcal{O}(a_2)\oplus \cdots \mathcal{O}(a_r)$ be a vector bundle on $\mathbb{P}^n$, where 
$0\leqslant a_1 \leqslant a_2 \leqslant \cdots \leqslant a_r$. 
Then, $\mathcal{E}$ is almost Fano if and only if $0\leqslant c_1(\mathcal{E})=$ $\displaystyle \sum^r_{i=1}a_i \leqslant n+1$.
Moreover $\mathcal{E}$ is not Fano if and only if $c_1(\mathcal{E})=n+1$.
\end{proposition}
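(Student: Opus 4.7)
The plan is to set $X = \mathbb{P}_{\mathbb{P}^n}(\mathcal{E})$ and apply the canonical bundle formula
\[
-K_X = (r+1)\,\xi_{\mathcal{E}} + \bigl(n+1 - c_1(\mathcal{E})\bigr)H,
\]
together with Kleiman's criterion on an explicit description of $\overline{NE(X)}$. Both $\xi_{\mathcal{E}}$ and $H$ are nef: $H$ is the pullback of the hyperplane class, and $\mathcal{E}$ is globally generated (it has a trivial summand and every other summand $\mathcal{O}(a_i)$ has $a_i\geqslant 0$), so $\xi_{\mathcal{E}}$ is even basepoint-free.

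Next I would identify the two extremal rays of $\overline{NE(X)}$, which must exist since $\rho(X)=2$. Let $\ell_f$ be a line in a fiber of $\pi$, and let $\ell_s$ be a line in the section $s\colon\mathbb{P}^n\hookrightarrow X$ corresponding to the projection $\mathcal{E}\twoheadrightarrow\mathcal{O}$ onto the trivial summand. One checks $\xi_{\mathcal{E}}\cdot\ell_f = 1$, $H\cdot\ell_f = 0$, while $\xi_{\mathcal{E}}\cdot\ell_s = 0$, $H\cdot\ell_s = 1$; since every curve pairs nonnegatively with the two nef divisors $\xi_{\mathcal{E}}$ and $H$, these classes generate $\overline{NE(X)}$. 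Intersecting $-K_X$ with $\ell_f$ and $\ell_s$ produces $r+1$ and $n+1 - c_1(\mathcal{E})$, so Kleiman's criterion yields that $-K_X$ is nef iff $c_1(\mathcal{E})\leqslant n+1$, and ample iff $c_1(\mathcal{E}) < n+1$.

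Bigness is immediate from ampleness when $c_1(\mathcal{E}) < n+1$; the case $c_1(\mathcal{E}) = n+1$ is the technical heart, because there $-K_X = (r+1)\xi_{\mathcal{E}}$ and one must show $\xi_{\mathcal{E}}^{\,n+r} > 0$. I would compute this top intersection number by pushing down: from the relation $\xi^{r+1} = c_1(\mathcal{E})\xi^r - c_2(\mathcal{E})\xi^{r-1} + \cdots$ in the Chow ring of $X$, induction on $k$ gives, for $\mathcal{E} = \mathcal{O}\oplus\bigoplus_{i=1}^r \mathcal{O}(a_i)$, the formula
\[
\pi_*\bigl(\xi_{\mathcal{E}}^{\,r+k}\bigr) = h_k(a_1,\ldots,a_r)\,H^k,
\]
where $h_k$ is the complete homogeneous symmetric polynomial. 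Integrating over $\mathbb{P}^n$ yields $\xi_{\mathcal{E}}^{\,n+r} = h_n(a_1,\ldots,a_r)$, which is strictly positive because $c_1(\mathcal{E}) = n+1 \geqslant 1$ forces some $a_i \geqslant 1$.

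Assembling everything, the condition $c_1(\mathcal{E})\geqslant 0$ is automatic from $a_i\geqslant 0$, so $\mathcal{E}$ is almost Fano iff $c_1(\mathcal{E})\leqslant n+1$, and the "not Fano" clause is exactly the failure of strict ampleness, i.e.\ $c_1(\mathcal{E}) = n+1$. The only nontrivial ingredient is the Segre push-down identity $\xi_{\mathcal{E}}^{\,n+r} = h_n(a_1,\ldots,a_r)$; the rest is Kleiman's criterion applied in a Picard rank $2$ situation where the Mori cone is transparent because of the trivial summand of $\mathcal{E}$.
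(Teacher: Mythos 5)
Your proof is correct, and its overall skeleton (canonical bundle formula on $X=\mathbb{P}_{\mathbb{P}^n}(\mathcal{E})$, then decide nef/ample, then handle bigness separately when $c_1(\mathcal{E})=n+1$) matches the paper's; but the two key steps are carried out by genuinely different means. For nefness and ampleness the paper simply asserts the answer is ``easy to see,'' whereas you make it rigorous by exhibiting the two extremal rays $[\ell_f]$ and $[\ell_s]$ (the latter available precisely because of the trivial quotient $\mathcal{E}\twoheadrightarrow\mathcal{O}$) and invoking Kleiman's criterion in Picard rank $2$ --- a welcome expansion, and incidentally your formula $-K_X=(r+1)\xi_{\mathcal{E}}+(n+1-c_1(\mathcal{E}))H$ has the correct rank and sign, unlike the one printed in the paper. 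For bigness in the boundary case $c_1(\mathcal{E})=n+1$, the paper observes that $H^0(\xi_{\mathcal{E}}-H)\cong H^0(\mathcal{E}(-1))\neq 0$ (some $a_i\geqslant 1$) and writes $-K_X$ as a nef-and-big divisor plus the effective divisor $\xi_{\mathcal{E}}-H$, concluding by Kodaira's lemma; you instead compute the top self-intersection directly via the Segre push-down $\xi_{\mathcal{E}}^{\,n+r}=h_n(a_1,\dots,a_r)>0$. Both are valid; the paper's route is softer and avoids any intersection computation, while yours is more self-contained and makes the numerical positivity of $\xi_{\mathcal{E}}$ explicit (showing $\xi_{\mathcal{E}}$ itself is already big, so $-K_X=(r+1)\xi_{\mathcal{E}}$ is big with no auxiliary decomposition needed).
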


\begin{proof}
Put $X=\mathbb{P}_{\mathbb{P}^n}(\mathcal{E})$. 
Then, we have $-K_X=r\xi_{\mathcal{E}}-(n+1-c_1(\mathcal{E}))H$. 
From the choice of $\mathcal{E}$, we can check naturally that $\mathcal{E}$ is Fano if and only if  $0\leqslant c_1(\mathcal{E}) \leqslant n$.
Next we will establish the latter part. It is easy to see that $-K_M$ is nef but not ample if and only if $c_1(\mathcal{E})=n+1$.  
Therefore it is sufficient to show that $-K_M$ is big if $c_1(\mathcal{E})=n+1$. 
In this case, $H^0(\xi_{\mathcal{E}}-H)\cong H^0(\mathcal{E}(-1))\neq 0$. 
By Kodaira's lemma, $-K_M=r\xi_{\mathcal{E}}=((r-1)\xi_{\mathcal{E}}+H)+(\xi_{\mathcal{E}}-H)$ is big. 
\end{proof}

From now on, we give a proof of Theorem B. 
The proof is divided into three parts, $($I$)\;n\geqslant4$, $($I$\hspace{-.1em}$I$)\;n=3$ and $($I$\hspace{-.1em}$I$\hspace{-.1em}$I$)\;n=2$.
%%%%%%%%%%%%%%%%%%%%%%%%%%%%%%%%%%%%%%%%%%%%%%%%%%%%%%%%%%%%%%%%%%%%%%%%%%% 
\begin{case1}
\end{case1}
At first, we consider the case where $n\geqslant4$. 
The claim is as follows.
\begin{proposition}\label{prop1}
Let $\mathcal{E}$ be an almost Fano 2-bundle on $\mathbb{P}^n$, $n\geq 4$. 
Then $\mathcal{E}$ is a direct sum of two line bundles. 
\end{proposition}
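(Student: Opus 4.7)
The plan is to adapt the Mori-theoretic classification of rank $2$ Fano bundles on $\mathbb{P}^n$ from \cite{s-w3} and \cite{a-p-w} to the almost Fano setting, using Theorem \ref{thm} and Lemma \ref{wis} as substitutes for the ampleness of $-K_X$. After tensoring $\mathcal{E}$ by a suitable line bundle, I would normalise $c_1(\mathcal{E})\in\{0,-1\}$. Put $X=\mathbb{P}_{\mathbb{P}^n}(\mathcal{E})$ with projection $\pi$; since $\pi$ is a $\mathbb{P}^1$-bundle and $\rho(\mathbb{P}^n)=1$, we have $\rho(X)=2$, so by Theorem \ref{thm}(2) the Kleiman--Mori cone $\overline{NE(X)}$ has exactly two extremal rays. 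One of them is spanned by a line in a fibre of $\pi$; denote the other by $R_1$, and let $C$ be an extremal rational curve generating $R_1$. Set $d=H\cdot C\geqslant 1$ and $a=\xi_{\mathcal{E}}\cdot C$.

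Writing $-K_X=2\xi_{\mathcal{E}}+(n+1-c_1(\mathcal{E}))H$ and applying Lemma \ref{wis} to $C$, one obtains $2a+(n+1-c_1(\mathcal{E}))d\leqslant (n+1)d$, hence $2a\leqslant c_1(\mathcal{E})\cdot d\leqslant 0$, with $a\leqslant -1$ when $c_1(\mathcal{E})=-1$. This negativity of $a$ means that deformations of $C$ inside $X$ sweep out a divisor on which $\xi_{\mathcal{E}}$ is non-positive; analysing the Hirzebruch surface $\pi^{-1}(\ell)$ for a line $\ell\subset\mathbb{P}^n$ meeting $\pi(C)$ appropriately then forces the restriction $\mathcal{E}|_\ell$ to carry a summand $\mathcal{O}_\ell(t)$ with $t\geqslant 1$. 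A standard argument globalising the maximal destabilising subsheaf along a family of lines yields a sub-line bundle $\mathcal{O}_{\mathbb{P}^n}(t)\hookrightarrow\mathcal{E}$ whose cokernel has the form $\mathcal{I}_Z(c_1(\mathcal{E})-t)$ for a closed subscheme $Z\subset\mathbb{P}^n$ of codimension at least $2$.

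The final steps are to show $Z=\emptyset$ and to split the resulting extension. For the former, I would use the Chern class identity $[Z]=c_2(\mathcal{E}(-t))$ in $A^2(\mathbb{P}^n)$ combined with the numerical constraints extracted from $R_1$ to force $c_2(\mathcal{E}(-t))=0$, so that $Z$ is actually empty. The extension then splits automatically, since
\[\mathrm{Ext}^1\bigl(\mathcal{O}(c_1(\mathcal{E})-t),\mathcal{O}(t)\bigr)=H^1\bigl(\mathbb{P}^n,\mathcal{O}(2t-c_1(\mathcal{E}))\bigr)=0\]
for $n\geqslant 2$.

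The main obstacle, which does not appear in the purely Fano case, is the possibility that $-K_X\cdot C=0$: then $R_1$ is a $K_X$-trivial extremal ray, $\varphi_{R_1}$ is a small birational contraction rather than a Mori contraction of Fano type, and the inequality in Lemma \ref{wis} is saturated, so the restriction-to-a-line arguments available in the Fano case give no slack. I expect the hypothesis $n\geqslant 4$ to enter precisely here, via a dimension count on the exceptional locus of $\varphi_{R_1}$: it cannot dominate $\mathbb{P}^n$, so after restricting $\mathcal{E}$ to a generic hyperplane $\mathbb{P}^{n-1}\subset\mathbb{P}^n$ one falls back into a genuinely Fano situation, where the classical rank $2$ Fano bundle classification, together with a standard Horrocks-type lifting, yields the desired global splitting of $\mathcal{E}$.
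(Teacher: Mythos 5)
Your plan is genuinely different from the paper's proof, which does not use Mori theory at this stage at all: the paper first shows (Lemma \ref{lemma1}) that $\mathcal{E}(n)$ is $n$-regular, hence globally generated, via Le Potier and Kawamata--Viehweg vanishing, and then applies the numerical splitting criteria for globally generated $2$-bundles from \cite{a-p-w} (Lemma \ref{lemma2}) for $n\geqslant 6$ and the nefness classifications \cite[Propositions 9.2, 9.4]{a-p-w} for $n=4,5$. Measured against either route, your sketch has a genuine gap at its central step: the passage from $\xi_{\mathcal{E}}\cdot C\leqslant 0$ on the second extremal ray to a sub-line bundle $\mathcal{O}(t)\hookrightarrow\mathcal{E}$ with $t\geqslant 1$ is unjustified, and it cannot be repaired without invoking $n\geqslant 4$ earlier than you do. Exactly the same numerical setup ($\rho(X)=2$, two extremal rays, $2\xi_{\mathcal{E}}\cdot C\leqslant c_1(\mathcal{E})\,H\cdot C$ from Lemma \ref{wis}) holds for the stable almost Fano bundles on $\mathbb{P}^2$ and $\mathbb{P}^3$ appearing in Theorem B, which admit no destabilizing sub-line bundle of positive degree; for a stable normalized bundle the generic line has splitting type $(0,0)$ or $(0,-1)$, so globalising the maximal destabilising subsheaf along lines yields nothing of degree $\geqslant 1$. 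Since your argument uses $n\geqslant 4$ only in the final paragraph, it would otherwise ``prove'' splitting in dimensions where it is false. The whole content of the proposition is the exclusion of stable bundles for $n\geqslant 4$, which the paper obtains from global generation plus the inequality $c_1^2\geqslant 4c_2$ of Lemma \ref{lemma2}(2) (a Bogomolov-type obstruction), and you have no substitute for that input.

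The secondary step also fails as stated: your treatment of a $K_X$-trivial ray by restricting to a generic hyperplane $\Lambda\cong\mathbb{P}^{n-1}$ goes in the wrong direction. One has $-K_{\mathbb{P}_{\Lambda}(\mathcal{E}|_\Lambda)}=(-K_X)|_{\pi^{-1}(\Lambda)}-H$, so the restricted bundle is \emph{less} positive than $\mathcal{E}$, not more; for instance $\mathcal{O}\oplus\mathcal{O}(n)$ is Fano on $\mathbb{P}^n$ but only almost Fano after restriction to $\mathbb{P}^{n-1}$. Moreover, even if the null locus of $-K_X$ fails to dominate $\mathbb{P}^n$, it will in general meet $\pi^{-1}(\Lambda)$ for every hyperplane $\Lambda$, so one does not ``fall back into a genuinely Fano situation.'' If you want to pursue the Mori-theoretic route, the correct use of $n\geqslant 4$ is through the dimension estimates for fibres of the second (Fano--Mori) contraction as in \cite{s-w3} and \cite{a-p-w}, together with a separate argument for the crepant case; but that is a substantially different and longer argument than the one you outline.
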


These bundles are classified in Proposition \ref{split}. 
To show this, we need the next two lemmata.

\begin{lemma}\label{lemma1}
Let $\mathcal{E}$ be a normalized $\mathrm{rank}$ 2 almost Fano bundle on $\mathbb{P}^n$. If $n\geqslant 4$, then $\mathcal{E}(n)$ is generated by its global sections.
\end{lemma}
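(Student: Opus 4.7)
The plan is to apply the Castelnuovo--Mumford regularity criterion: in order to conclude that $\mathcal{E}(n)$ is globally generated, it suffices to show that $\mathcal{E}$ is $n$-regular, i.e.\ $H^i(\mathbb{P}^n,\mathcal{E}(n-i))=0$ for every $1\leqslant i\leqslant n$.

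First I would transfer each such cohomology group up to $X=\mathbb{P}_{\mathbb{P}^n}(\mathcal{E})$: since $\pi_*\mathcal{O}_X(\xi_{\mathcal{E}})=\mathcal{E}$ and the higher direct images of $\xi_{\mathcal{E}}$ vanish, Leray gives $H^i(\mathbb{P}^n,\mathcal{E}(k))\cong H^i(X,\xi_{\mathcal{E}}+kH)$. Then I would invoke Kawamata--Viehweg vanishing on the almost Fano variety $X$: using the anticanonical formula $-K_X=2\xi_{\mathcal{E}}+(n+1-c_1)H$, the test divisor rewrites as
\[
\xi_{\mathcal{E}}+(n-i)H-K_X \;=\; \tfrac{3}{2}(-K_X) + \tfrac{n-1+c_1-2i}{2}\,H.
\]
Both $-K_X$ and $H$ are nef (the former by Theorem A, the latter by ampleness of $\mathcal{O}_{\mathbb{P}^n}(1)$), so the right-hand side is nef whenever the coefficient of $H$ is nonnegative, and automatically big because $-K_X$ is big. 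This already covers the small range of $i$.

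For the remaining values of $i$, I would use Serre duality on $\mathbb{P}^n$ together with $\mathcal{E}^\vee\cong\mathcal{E}(-c_1)$ (valid for rank $2$ bundles) to rewrite the group as $H^{n-i}(\mathbb{P}^n,\mathcal{E}(i-2n-1-c_1))^\vee$, and then rule out nonzero classes by exploiting the nefness of $-K_X$. The key geometric input is a splitting-type bound on lines: for any line $\ell\subset\mathbb{P}^n$, write $\mathcal{E}|_\ell\cong\mathcal{O}(a_\ell)\oplus\mathcal{O}(b_\ell)$ with $a_\ell\geqslant b_\ell$; then restricting $-K_X$ to the negative section of $\pi^{-1}(\ell)\cong\mathbb{F}_{a_\ell-b_\ell}$ forces $b_\ell\geqslant -(n+1-c_1)/2$. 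In particular any line subbundle $\mathcal{O}(m)\hookrightarrow\mathcal{E}$ satisfies $m\leqslant(n+1+c_1)/2$, otherwise the induced section of $\pi$ would have $-K_X$ of negative degree. Since $(n+1+c_1)/2<n+1$ for $n\geqslant 4$, this yields $H^0(\mathcal{E}(-m))=0$ for all $m\gg 0$ and dually settles the extreme case $i=n$.

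The main obstacle I anticipate is the middle range of $i$, where neither Kawamata--Viehweg alone nor the simple Serre-duality shift produces vanishing, because both translate into cohomology groups $H^j(\mathcal{E}(k))$ with $0<j<n$ and $k$ very negative. To close these, one has to combine the two techniques with the splitting-type bound above and the polyhedrality of $\overline{NE(X)}$ from Theorem \ref{thm}: the extremal rays of $X$ other than the $\pi$-fiber ray produce specific test curves, and playing the associated anticanonical inequalities against the would-be cohomology classes should force the remaining vanishings.
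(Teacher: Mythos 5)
Your overall skeleton agrees with the paper's: both arguments aim to show that $\mathcal{E}$ is $n$-regular, i.e.\ $H^i(\mathbb{P}^n,\mathcal{E}(n-i))=0$ for $1\leqslant i\leqslant n$, and then invoke Castelnuovo--Mumford. Your Kawamata--Viehweg step is also essentially the paper's treatment of $i=1$: writing $\xi_{\mathcal{E}}+(n-i)H-K_X$ as a nonnegative combination of $-K_X$ and $H$ works precisely when $i\leqslant (n-1+c_1)/2$, which for $n\geqslant 4$ covers $i=1$ but little more. The difficulty is that you have no actual argument for the range $2\leqslant i\leqslant n-1$ (for $n=4$, $c_1=0$ this is $i=2,3$). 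Your Serre-duality shift turns $H^i(\mathcal{E}(n-i))$ into $H^{n-i}(\mathcal{E}(i-2n-1-c_1))^\vee$, which for $i=n$ is an $H^0$ that your splitting-type bound on lines does kill; but for $2\leqslant i\leqslant n-1$ it is an intermediate cohomology group of a very negative twist, and such groups are typically nonzero for stable bundles. The closing paragraph of your proposal ("playing the associated anticanonical inequalities against the would-be cohomology classes") is not a proof, and I do not see how extremal test curves alone could produce these vanishings.

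The missing idea, which is exactly how the paper closes this range, is Le Potier vanishing for ample vector bundles. From the nefness of $-K_X=2\xi_{\mathcal{E}}+(n+1-c_1)H$ one gets that $\mathcal{E}(k+1)$ is nef for $k=\lfloor n/2\rfloor$ (in the case $n=2k$, $c_1=-1$ treated in the paper), hence $\mathcal{E}(k+2)$ is ample. Le Potier then gives $H^i(\mathcal{E}(k+2+j)\otimes K_{\mathbb{P}^n})=0$ for all $i\geqslant 2$ and $j\geqslant 0$ --- note the threshold $i\geqslant \mathrm{rank}\,\mathcal{E}=2$, which is precisely what makes the whole middle range accessible --- and choosing $j=3k-i-1\geqslant 0$ yields $H^i(\mathcal{E}(n-i))=0$ for every $i\geqslant 2$ in one stroke. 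You should replace your duality-plus-extremal-ray paragraph with this argument; the rest of your setup (Castelnuovo--Mumford, the Leray identification, and the Kawamata--Viehweg step for $i=1$) can stand as is.
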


\begin{proof}
The proof is in the similar fashion as in \cite[Proposition 2.6]{a-p-w}. 
We give an outline of the proof in the case where $n$ is even and $c_1=-1$. 
Put $n=2k$ and $X=\mathbb{P}_{\mathbb{P}^n}(\mathcal{E})$,  then we have 
\[-K_X=2\xi_{\mathcal{E}}+(2k+2)H=2(\xi_{\mathcal{E}}+(k+1)H)\]
 is nef and big. 
 Therefore $\mathcal{E}(k+2)$ is ample vector bundle. 
 By Le Potier vanishing theorem, 
 \[H^i(\mathcal{E}(k+2+j)\otimes K_{\mathbb{P}^n})=H^i(\mathcal{E}(j-k+1))=0\]
 for any $i\geqslant 2$ and $j\geqslant 0$. 
 Especially  letting $j=3k-i-1$, we have $H^i(\mathcal{E}(n-i))=0$ for $i\geqslant2$. 
 Moreover 
 \[H^1(\mathbb{P}_{\mathbb{P}^n}(\mathcal{E}), 3(\xi_{\mathcal{E}}+(k+1)H)+(k-2)H+K_{\mathbb{P}_{\mathbb{P}^n}(\mathcal{E})})=H^1(\mathbb{P}^n, \mathcal{E}(n-1))=0\]
 from Kawamata-Vieweg vanishing theorem. 
Combining above, then we see that $H^i(\mathcal{E}(n-i))=0$ for $i\geqslant1$ namely $\mathcal{E}$ is $n-$regular. 
By means of Castelnuovo-Mumford lemma, $\mathcal{E}(n)$ is generated by its global sections. 
Other cases are proved in the same way.
\end{proof}

\begin{lemma}\label{lemma2}\cite{a-p-w} 
Let $\mathcal{E}$ be a globally generated 2-bundle on $\mathbb{P}^n$. Then we have

$(1)$ If $\mathcal{E}$ is not stable and $c_2(\mathcal{E})<(n-1)(c_1(\mathcal{E})-n+2)$, then $\mathcal{E}$ is split into a direct sum of two line bundles.

$(2)$ If $n\geqslant 6$ and $c_1(\mathcal{E})^2<4c_2(\mathcal{E})$, then we have $c_1(\mathcal{E})\geq 2n+3$.
\end{lemma}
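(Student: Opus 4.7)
The two parts are logically independent; both rely on extracting an exact sequence from $\mathcal{E}$ and then squeezing numerical information out of the global generation hypothesis.

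For part (1) my plan is to exploit the failure of stability to produce a maximal destabilizing subsheaf $\mathcal{O}_{\mathbb{P}^n}(a) \hookrightarrow \mathcal{E}$ with $2a \geq c_1(\mathcal{E})$. Saturating the quotient yields a short exact sequence
\[
0 \to \mathcal{O}_{\mathbb{P}^n}(a) \to \mathcal{E} \to \mathcal{I}_Z(b) \to 0
\]
with $a+b = c_1(\mathcal{E})$, $a \geq b$, and $Z \subset \mathbb{P}^n$ a closed subscheme of codimension $\geq 2$ (possibly empty) satisfying $c_2(\mathcal{E}) = ab + \deg Z$. If $Z = \emptyset$, the extension is classified by $H^1(\mathbb{P}^n, \mathcal{O}(a-b))$, which vanishes because $a-b \geq 0$ and $n \geq 2$, so $\mathcal{E}$ splits as a sum of two line bundles. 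The remaining case $Z \neq \emptyset$ must be ruled out. Since $\mathcal{E}$ is globally generated, so is $\mathcal{I}_Z(b)$; in particular $b \geq 1$. I would then cut $Z$ down to a finite scheme by intersecting with $\dim Z$ generic hyperplanes, use that global generation of $\mathcal{I}_Z(b)$ restricts to those hyperplanes, and extract a lower bound on $\deg Z$. Comparing with $ab + \deg Z = c_2(\mathcal{E}) < (n-1)(a+b-n+2)$ is then expected to give a contradiction.

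For part (2) the plan is to invoke Serre's construction: a general section of $\mathcal{E}$ vanishes in pure codimension $2$ on a subscheme $Y$ with $\deg Y = c_2(\mathcal{E})$, giving
\[
0 \to \mathcal{O}_{\mathbb{P}^n} \to \mathcal{E} \to \mathcal{I}_Y(c_1) \to 0.
\]
The hypothesis $c_1^2 < 4c_2$ excludes $Y = \emptyset$ (that would force $c_2 = 0$), so $Y$ is a genuine codimension-$2$ subvariety of dimension $n-2 \geq 4$, and subcanonical with $\omega_Y \cong \mathcal{O}_Y(c_1 - n - 1)$. Because $n \geq 6$ is large, Barth--Larsen type restrictions on such high-dimensional smooth submanifolds sharply constrain the pair $(c_1, c_2)$. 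Together with the global generation of $\mathcal{I}_Y(c_1)$, which bounds the degree of low-degree hypersurfaces containing $Y$, this should push $c_1$ above the threshold $2n+3$.

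The main obstacle in both parts is the conversion from global generation to a sharp numerical estimate. In part (1) the delicate point is that $Z$ may be quite complicated, so the hyperplane-section argument needs to be done carefully enough to make the exact constants $n-1$ and $c_1 - n + 2$ fall out. In part (2) the interplay between the topology of the large-dimensional subcanonical variety $Y$ and the geometry of hypersurfaces of degree $c_1$ cutting it out is the crux; without $n \geq 6$, the Barth--Larsen input is too weak to pin down the bound.
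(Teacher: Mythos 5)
The first thing to say is that the paper does not prove this lemma at all: it is quoted from Ancona--Peternell--Wi\'sniewski \cite{a-p-w} and used as a black box, so there is no in-paper argument to measure yours against. Judged on its own, your proposal sets up the right exact sequences but leaves both decisive steps unproven, and in part (1) the mechanism you propose points in the wrong direction. From $0\to\mathcal{O}(a)\to\mathcal{E}\to\mathcal{I}_Z(b)\to 0$ with $a\ge b$ and $c_2=ab+\deg Z$, what you need in order to contradict $c_2<(n-1)(c_1-n+2)$ is a \emph{lower} bound on $ab+\deg Z$ when $Z\neq\emptyset$. But global generation of $\mathcal{I}_Z(b)$ produces an \emph{upper} bound on $\deg Z$: the base locus of $|H^0(\mathcal{I}_Z(b))|$ is exactly $Z$, so two general members meet properly and B\'ezout gives $\deg Z\le b^2$. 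Your plan of cutting by generic hyperplanes and ``extracting a lower bound on $\deg Z$'' from global generation can only reproduce estimates of this upper-bound type. The real content of the statement lies elsewhere: the section of $\mathcal{E}(-a)$ exhibits $Z$ as a subcanonical locally complete intersection with $\omega_Z\cong\mathcal{O}_Z(b-a-n-1)$, a strictly negative twist since $a\ge b$; in particular $Z$ cannot be a complete intersection of two hypersurfaces (their degrees would have to sum to $b-a\le 0$), and it is this structural restriction on a non-complete-intersection, negatively subcanonical, codimension-two scheme that is responsible for the specific threshold $(n-1)(c_1-n+2)$. Your sketch never identifies where that quantity comes from, so the case $Z\neq\emptyset$ remains genuinely open.

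Part (2) has the same character. Appealing to Barth--Larsen on the smooth zero scheme $Y$ of a general section is the right instinct --- one uses it to pin down the low-degree cohomology and Picard group of $Y$ and then feeds this into the self-intersection (double-point) formula for the codimension-$2$ embedding, which is how $c_1^2<4c_2$ gets converted into a numerical inequality --- but ``this should push $c_1$ above the threshold'' is precisely the proof, and no inequality is derived; the constant $2n+3$ is exactly the part that requires the computation. (One correct observation worth keeping: $c_1^2<4c_2$ rules out not only $Y=\emptyset$ but also $Y$ being a complete intersection, since that would force $c_1^2\ge 4c_2$; this is why the topology of $Y$ enters at all.) As written, both halves are plans rather than proofs. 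Since the paper itself treats the lemma as a citation, the honest options are either to carry these computations out in full or to cite \cite{a-p-w} as the paper does.
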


\begin{proof}[Proof of Proposition \ref{prop1}]
Applying Lemma \ref{lemma2} to $\mathcal{E}(n)$, we can show immediately that $\mathcal{E}$ is split except for $n=4\;and\;5$ essentially in the same as in the proof of Proposition 3.1 and Proposition 5.1 in \cite{a-p-w}.
If $n=4$ $($resp. $n=5)$, then $\mathcal{E}(3)$ $($resp. $\mathcal{E}(4))$ is nef. From \cite[Proposition 9.2]{a-p-w} $($resp. \cite[Proposition 9.4]{a-p-w}$)$, $\mathcal{E}$ is split.
\end{proof}
%%%%%%%%%%%%%%%%%%%%%%%%%%%%%%%%%%%%%%%%%%%%%%%%%%%%%%%%%%%%%%%%%%%%%%%%%%%%%%%%%%%%%
\begin{case2}
\end{case2}
Next, we consider the case where $n=3$. 
To start with, we  demonstrate $\mathrm{rank}$ 2 almost Fano bundle on $\mathbb{P}^3$ is one of vector bundles below.
\begin{proposition}
Let $\mathcal{E}$ be a normalized almost Fano 2-bundle on $\mathbb{P}^3$.
Then $\mathcal{E}$ is isomorphic to a direct sum of two line bundles or one of the following :

$(1)$ stable vector bundle with $c_1=0$, $c_2=1$.

$(2)$ stable vector bundle with $c_1=0$, $c_2=2$.

$(3)$ stable vector bundle with $c_1=0$, $c_2=3$.
\end{proposition}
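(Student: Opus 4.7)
The plan is to analyze normalized rank 2 almost Fano bundles $\mathcal{E}$ on $\mathbb{P}^3$ by distinguishing the stable and non-stable cases, and in each branch to extract numerical bounds from the nef-big divisor $-K_X$ on $X := \mathbb{P}_{\mathbb{P}^3}(\mathcal{E})$.

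First I would compute $(-K_X)^4$. Writing $-K_X = 2\xi_{\mathcal{E}} + (4-c_1)H$ and using the Grothendieck relation $\xi_{\mathcal{E}}^{\,2} = c_1 H\xi_{\mathcal{E}} - c_2 H^2$ together with the standard push-forward identities for $\pi_* \xi_{\mathcal{E}}^{\,k}$, a direct expansion yields $(-K_X)^4 = 128(4 - c_2)$ when $c_1 = 0$ and $(-K_X)^4 = 544 - 128 c_2$ when $c_1 = -1$. Since $-K_X$ is big and nef, $(-K_X)^4 > 0$, so $c_2 \leqslant 3$ in the $c_1 = 0$ case and $c_2 \leqslant 4$ in the $c_1 = -1$ case. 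Combined with Bogomolov's inequality $c_1^2 \leqslant 4 c_2$ in the stable case (applied to the restriction of $\mathcal{E}$ to a general plane, which remains stable by Mehta--Ramanathan), this confines $(c_1, c_2)$ to a small finite set of possibilities.

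If $\mathcal{E}$ is not stable, then the normalization forces $H^0(\mathcal{E}) \neq 0$, yielding an exact sequence $0 \to \mathcal{O} \to \mathcal{E} \to \mathcal{I}_Y(c_1) \to 0$ with $Y \subset \mathbb{P}^3$ of codimension $\geqslant 2$ (possibly empty). I would first exclude $Y \neq \varnothing$ by taking the rational section $\sigma$ of $\pi$ associated to the quotient $\mathcal{E} \to \mathcal{I}_Y(c_1)$ and showing that the closure of $\sigma(\mathbb{P}^3)$ meets $\pi^{-1}(Y)$ in a curve against which $-K_X$ has strictly negative intersection, contradicting nefness. With $Y = \varnothing$ the sequence degenerates to an extension of $\mathcal{O}(c_1)$ by $\mathcal{O}$, which splits because $H^1(\mathbb{P}^3, \mathcal{O}(-c_1)) = 0$ for $c_1 \in \{0, -1\}$; hence $\mathcal{E}$ is a direct sum of line bundles.

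In the stable case, the $c_1 = 0$ possibilities $c_2 \in \{1, 2, 3\}$ match the list (the lower bound $c_2 \geqslant 1$ coming from Bogomolov and the fact that a stable rank $2$ bundle with $c_1 = 0$ has no global sections), so it remains to exclude $c_1 = -1$. I would use Schwarzenberger's parity condition $c_1(\mathcal{E})\,c_2(\mathcal{E}) \equiv 0 \pmod{2}$ on $\mathbb{P}^3$, which rules out odd $c_2$ when $c_1 = -1$ and leaves only $c_2 \in \{2, 4\}$, combined with the nefness inequality $(-K_X) \cdot C_0 = 2a_1 + 5 \geqslant 0$ applied to the minimal section $C_0 \subset \pi^{-1}(\ell)$ over a line $\ell \subset \mathbb{P}^3$ of splitting type $(a_1, -1 - a_1)$: this restricts the jumping to $a_1 \geqslant -2$ and, together with more delicate input on stable bundles with such mild jumping (e.g.\ via the Hartshorne--Serre correspondence or the geometry of the jumping-line locus), eliminates the two remaining $c_1 = -1$ cases. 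The main obstacle will be precisely this last exclusion, since the purely numerical constraints do not suffice and one must invoke deeper bundle-theoretic results; a secondary difficulty is making the $Y \neq \varnothing$ exclusion in the non-stable step rigorous, as the closure of the rational section must be handled carefully when its image meets the indeterminacy locus.
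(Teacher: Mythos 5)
Your numerical framework is sound and, in the stable $c_1=0$ branch, coincides with the paper: $(-K_X)^4=128(4-c_2)>0$ gives $c_2\leqslant 3$, and Bogomolov gives $c_2\geqslant 1$. But there is a genuine gap at exactly the point you flag as "the main obstacle": you reduce the stable $c_1=-1$ case to $c_2\in\{2,4\}$ (via Schwarzenberger parity and $(-K_X)^4=544-128c_2>0$) and then appeal to unspecified "deeper bundle-theoretic results" to kill these two cases. That elimination is the substance of the claim, and without it the proof is not complete. The paper never enters this case-by-case analysis: for $c_1=-1$ it observes that nefness of $-K_X=2\xi_{\mathcal{E}}+5H$ means $\mathcal{E}\langle\tfrac{5}{2}\rangle$ is nef as a $\mathbb{Q}$-twist, hence $\mathcal{E}(3)$ is \emph{ample}, and then runs the Szurek--Wi\'sniewski argument from the Fano-bundle classification on $\mathbb{P}^3$ to conclude that $\mathcal{E}$ splits --- so no stable bundle with $c_1=-1$ survives at all, stable or not. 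That one observation (nef $\mathbb{Q}$-twist plus ample twist gives an ample bundle, to which an existing splitting criterion applies) is the missing idea.

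A secondary weakness is the non-stable step. First, the sequence $0\to\mathcal{O}\to\mathcal{E}\to\mathcal{I}_Y(c_1)\to 0$ with $Y$ of codimension $\geqslant 2$ presumes the section vanishes in codimension $\geqslant 2$; since $H^0(\mathcal{E}(-1))$ or $H^0(\mathcal{E}(-2))$ may be nonzero, you must stratify by the maximal twist $k$ with $H^0(\mathcal{E}(-k))\neq 0$ and work with a section of $\mathcal{E}(-k)$, as the paper does for $k=2,1,0$. Second, your exclusion of $Y\neq\emptyset$ via the closure of the rational section is only a sketch, and the closure's behaviour over $Y$ is exactly the delicate point. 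The paper's route is more elementary and fully explicit: for a line $L$ meeting $Y$ the splitting type of $\mathcal{E}(-k)|_L$ violates nefness of $\mathcal{E}(2)$ when $\deg Y\geqslant 2$ (or $Y$ finite); the degree of $Y$ is bounded by $\xi_{\mathcal{E}}\cdot(-K_X)^3=8-6c_2\geqslant 0$, which holds because $\xi_{\mathcal{E}}$ is effective and $-K_X$ is nef; and the residual case $\deg Y=1$ is ruled out by adjunction, $\deg K_Y=\deg(K_{\mathbb{P}^3}+c_1(\mathcal{E}(-k)))|_Y\neq -2$. I would rebuild your non-stable step along those lines and replace the $c_1=-1$ hand-waving with the ampleness-of-$\mathcal{E}(3)$ argument.
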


\begin{proof}
We shall discuss the two cases $c_1=0$ and $c_1=-1$ separately.

First we treat $c_1=-1$. 
Since $-K_X=2\xi_{\mathcal{E}}+5H$ is nef and big, we have that $\mathcal{E}(3)$ is ample. 
We can apply the argument in \cite[Theorem 2.2]{s-w3}, to this case and we can show that $\mathcal{E}$ is decomposed into a direct sum of two line bundles.

Next we treat $c_1=0$. 
In this case, $\mathcal{E}(2)$ is nef. 
If $H^0(\mathcal{E}(-2))\not =0$, then we can take a non-zero section $s\in H^0(\mathcal{E}(-2))$. 
If $Z:=\{s=0\}=\emptyset $, then $\mathcal{E}$ is decomposed into a direct sum of line bundles. 
If $Z\not=\emptyset $, then for a line $L$ meeting $Z$ in a finite number of points we would have 
\[\mathcal{E}(-2)|_L\cong \mathcal{O}_L(d)\oplus \mathcal{O}_L(-4-d), (d\geq 1)\]
which contradicts to the nefness of $\mathcal{E}(2)$.
If $H^0(\mathcal{E}(-2))=0$ and $H^0(\mathcal{E}(-1))\not =0$, then we can take a non-zero section $s\in H^0(\mathcal{E}(-1))$. 
If $Z:=\{s=0\}=\emptyset $, then $\mathcal{E}$ is decomposed into a direct sum of line bundles.
If $Z\not=\emptyset $, then $Z$ is a curve. 
Suppose that $\mathrm{deg}Z\geq 2$, we can take a line $L$ intersecting with $Z$ at least two points. 
Then 
\[\mathcal{E}(-1)|_L\cong \mathcal{O}_L(d)\oplus \mathcal{O}_L(-2-d), (d\geq 2)\] 
and contradict to the nefness of $\mathcal{E}(2)$.
If $\mathrm{deg}Z=1$, then $Z$ is a line. 
But, 
\[\mathrm{deg}K_Z=\mathrm{deg}(K_{\mathbb{P}^3}+c_1(\mathcal{E}(-1)))| _Z=-6.\]
This is a contradiction.
If $H^0(\mathcal{E}(-1))=0$ and $H^0(\mathcal{E})\not =0$, then we can take a non-zero section $s\in H^0(\mathcal{E})$. 
If $Z:=\{s=0\}=\emptyset $, then $\mathcal{E}$ is decomposed into a direct sum of two line bundles.
If $Z\not=\emptyset $, then $Z$ is a curve and $\mathrm{deg}Z=c_2\geq 1$. 
On the other hand, $\xi _{\mathcal{E}}(-K_X)^3=8-6c_2\geq 0$. Therefore $\mathrm{deg}Z=1$ i.e. $Z$ is a line. 
But, 
\[\mathrm{deg}K_Z=\mathrm{deg}(K_{\mathbb{P}^3}+c_1(\mathcal{E}))| _Z=-4.\] 
This is a contradiction.
Finally we assume $H^0(\mathcal{E})=0$ i.e. $\mathcal{E}$ is stable. 
In this case, $c_1^2<4c_2$ and $(-K_X)^4=128(4-c_2)>0$ hold. 
Hence $1\leq c_2\leq 3$. 
\end{proof}

It is shown \cite{s-w3} that all stable bundles satisfying $c_1=0$, $c_2=1$ are Fano. 
If $c_2=2$, then $\mathcal{E}$ is 2-regular by \cite{har}. 
Therefore $-K_X=2(\xi _{\mathcal{E}}+2H)$ is nef and big i.e. $\mathcal{E}$ is almost Fano. Such $\mathcal{E}$ is not Fano bundle \cite{s-w3}. 
The case $c_2=3$ is more complicated. 
First we show such an almost Fano bundle really exists. 

\begin{proposition}\label{0-3}
There is an almost Fano stable bundle on $\mathbb{P}^3$ with $c_1=0$, $c_2=3$.
\end{proposition}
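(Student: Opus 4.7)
The plan is to construct $\mathcal{E}$ explicitly by Maruyama's theory of elementary transformations of vector bundles, starting from the stable rank 2 almost Fano bundle $\mathcal{E}_0$ on $\mathbb{P}^3$ with $c_1(\mathcal{E}_0)=0$ and $c_2(\mathcal{E}_0)=2$ whose existence was recorded just above, and then to verify that the output is stable and that its projectivization has nef and big anti-canonical class. Write $\pi_0 \colon X_0 = \mathbb{P}(\mathcal{E}_0) \to \mathbb{P}^3$.

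First I would set up the elementary transformation. The idea is to choose a smooth center $Z \subset X_0$ with appropriate normal bundle --- for instance the image of a section of $\pi_0$ over a smooth curve, or equivalently a chosen line subbundle of $\mathcal{E}_0$ along a smooth divisor in $\mathbb{P}^3$ --- blow up $Z$ in $X_0$, and contract the resulting exceptional divisor along its complementary $\mathbb{P}^1$-direction. Maruyama's theorem guarantees that the output $X$ is again a $\mathbb{P}^1$-bundle $\pi\colon X = \mathbb{P}(\mathcal{E}) \to \mathbb{P}^3$, and a standard Chern class calculation, together with an appropriate normalizing twist, produces $c_1(\mathcal{E})=0$ and $c_2(\mathcal{E})=3$.

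The second step is stability. Since $c_1(\mathcal{E})=0$, it suffices to show $H^0(\mathcal{E}) = 0$. I would argue by contradiction: a nonzero section of $\mathcal{E}$ would pull back through the elementary transformation to a section of $\mathcal{E}_0$ with prescribed vanishing along the center $Z$, which either contradicts the stability of $\mathcal{E}_0$ or is excluded by a generic choice of transformation data.

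The main obstacle will be verifying that $-K_X = 2(\xi_{\mathcal{E}} + 2H)$ is nef; bigness then drops out of the projection-formula calculation $(-K_X)^4 = 128(4-c_2) = 128 > 0$ together with nefness. Nefness of $-K_X$ is equivalent to nefness of $\mathcal{E}(2)$ on $\mathbb{P}^3$, and hence to the splitting $\mathcal{E}|_\ell \cong \mathcal{O}_\ell(a) \oplus \mathcal{O}_\ell(-a)$ satisfying $a \leqslant 2$ on every line $\ell \subset \mathbb{P}^3$. Generic lines satisfy this by Grauert--M\"ulich since $c_2(\mathcal{E})=3$ is small; the hard part is ruling out jumping lines with $a \geqslant 3$. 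For this I would combine the splitting behaviour on $Z$ prescribed by the elementary transformation with a dimension/degree count on the jumping-line locus, using the smallness of $c_2$ to bound it. An alternative route, which may be cleaner, is to analyse the Kleiman--Mori cone $\overline{NE(X)}$ directly: the elementary transformation introduces one new extremal contraction apart from $\pi$, and nefness of $-K_X$ reduces to checking non-negativity of $-K_X$ on the extremal curves of that contraction.
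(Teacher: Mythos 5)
Your overall strategy (build the bundle by a Maruyama elementary transformation, then check stability and nefness of $-K_X=2(\xi_{\mathcal E}+2H)$) is the same general framework as the paper, but the proposal has a genuine gap at exactly the point you identify as ``the main obstacle,'' and the reduction you propose there is not valid. Nefness of $\mathcal E(2)$ is \emph{not} equivalent to the splitting condition $\mathcal E|_\ell\cong\mathcal O_\ell(a)\oplus\mathcal O_\ell(-a)$ with $a\leqslant 2$ on every line: nefness must be tested against quotient line bundles of $\mathcal E(2)$ restricted to curves of every degree, and positivity on all lines does not imply positivity on higher-degree curves (the paper's own $\mathbb P^2$ examples show the critical curve can be a conic). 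Even for the necessary line condition, ``generic choice of transformation data'' plus a dimension count is not an argument here: the paper's Example following Proposition 2.8 shows that inside the $21$-dimensional component $\mathcal M_0(0,3)$ the bundles with a maximal-order jumping line already form a $20$-dimensional family, and the other component $\mathcal M_1(0,3)$ contains explicit non--almost-Fano members, so one must exhibit a \emph{specific} construction and verify nefness for it; nothing in your setup (an unspecified center $Z$ over a $c_2=2$ bundle) provides the leverage to do this. Your ``alternative route'' via the extremal rays of $\overline{NE(X)}$ is likewise a restatement of what must be proved, not a proof.

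What the paper actually does is choose the transformation data very carefully so that nefness can be checked by hand. It takes Mori's quartic K3 surface $S\subset\mathbb P^3$ with $\operatorname{Pic}(S)=\mathbb Z[A]\oplus\mathbb Z[C]$, $A^2=4$, $A.C=7$, $C^2=0$, $C$ a base-point-free elliptic curve, and $S$ containing no rational curves; the elementary transformation along $(S,C)$ yields $\mathcal F$ with $c_1=4$, $c_2=7$, and $\mathcal E=\mathcal F(-2)$ has $c_1=0$, $c_2=3$. Stability is immediate from $H^0(\mathcal F(-2))=H^0(\mathcal I_C(2))=0$ (the degree-$7$ elliptic curve $C$ lies on no quadric). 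Nefness of $\mathcal F$ is then split into two pieces: off $S$ the evaluation map $\mathcal O^{\oplus 2}\to\mathcal F$ is an isomorphism, so only curves in $S$ matter, and on $S$ one uses the extension $0\to\mathcal O_S(C)\to\mathcal F|_S\to\mathcal O_S(4A-C)\to 0$ together with control of the effective cone of $S$ (possible precisely because $S$ has no rational, hence no negative, curves) to see both pieces are nef. It is this arithmetic on the specially chosen K3 that replaces the jumping-line analysis you leave open; without some substitute for it, your construction does not yield the nefness claim.
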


To show this, we use the following result. 

\begin{theorem}{\rm\cite[Proposition 6]{mor} }\label{mor}
There is a nonsingular elliptic curve $C$ on a smooth quartic surface $S\subset \mathbb{P}^3$ and a very ample divisor $A$ on $S$ such that

$(1)\;\;Pic(S)\cong \mathbb{Z}[A]\oplus \mathbb{Z}[C].$ 

$(2)\;\;A^2=4,\; A.C=7,\; C^2=0.$

$(3)\;\;C\;is\;base\;point\;free.$

$(4)$  $S$ does not contain any rational curve.
\end{theorem}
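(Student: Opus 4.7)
The plan is to produce $\mathcal{E}$ by Maruyama's elementary transformation along the quartic surface $S\subset\mathbb{P}^3$ tailored to the elliptic curve $C\subset S$ supplied by Theorem \ref{mor}. Concretely, I would start from a well-chosen rank-2 bundle $\mathcal{F}$ on $\mathbb{P}^3$ (a decomposable candidate such as $\mathcal{O}_{\mathbb{P}^3}(1)\oplus\mathcal{O}_{\mathbb{P}^3}(3)$ or $\mathcal{O}_{\mathbb{P}^3}(2)^{\oplus 2}$) together with a line bundle of the form $L=\mathcal{O}_S(\alpha A+\beta C)$ on $S$ such that there is a surjection $\mathcal{F}|_S\twoheadrightarrow L$. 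The bundle $\mathcal{E}$ would then be defined by the elementary transformation
\[
0\longrightarrow\mathcal{E}\longrightarrow\mathcal{F}\longrightarrow i_{*}L\longrightarrow 0,
\]
with $i:S\hookrightarrow\mathbb{P}^3$ the inclusion, and Maruyama's theory ensures $\mathcal{E}$ is locally free of rank 2.

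To fix $c_1(\mathcal{E})=0$ and $c_2(\mathcal{E})=3$, I would use $\mathrm{ch}(\mathcal{E})=\mathrm{ch}(\mathcal{F})-\mathrm{ch}(i_{*}L)$ together with Grothendieck--Riemann--Roch for $i$ (normal bundle $\mathcal{O}_S(A)$) and the intersection numbers $A^{2}=4$, $A\cdot C=7$, $C^{2}=0$ from Theorem \ref{mor} to translate the Chern-class constraints into an elementary system in $(\alpha,\beta)$. This system should admit an integer solution for a suitable $\mathcal{F}$, and the existence of the required surjection $\mathcal{F}|_S\twoheadrightarrow L$ reduces to global generation of $L$ by the restriction of sections of $\mathcal{F}$, a cohomological computation on the K3 surface $S$.

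Stability would then follow, since $c_1(\mathcal{E})=0$, by showing $H^{0}(\mathcal{E})=0$. This becomes a comparison of global sections in the defining sequence, for which $\mathrm{Pic}(S)=\mathbb{Z}[A]\oplus\mathbb{Z}[C]$ combined with Theorem \ref{mor}(4) (no rational curves on $S$, hence no effective divisor class of negative self-intersection on the K3 surface $S$) is used to rule out candidate obstructing sections.

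To prove $-K_X=2(\xi_{\mathcal{E}}+2H)$ is nef and big on $X=\mathbb{P}(\mathcal{E})$, bigness is immediate from the identity $(-K_X)^{4}=128(4-c_{2})=128>0$ recorded in the previous proposition, so the real work is nefness. For this I would exploit the geometric picture of the elementary transformation: there is a birational map $\phi:\mathbb{P}(\mathcal{F})\dashrightarrow\mathbb{P}(\mathcal{E})$ obtained by blowing up the section $\Sigma\subset\pi^{-1}(S)$ determined by the quotient $\mathcal{F}|_S\to L$ and contracting the strict transform of $\pi^{-1}(S)$. Transferring intersection numbers across $\phi$, nefness reduces to checking $\mathcal{E}|_{\ell}\cong\mathcal{O}(d)\oplus\mathcal{O}(-d)$ with $d\leqslant 2$ for every line $\ell\subset\mathbb{P}^{3}$. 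The main obstacle is precisely this last verification: the choice of $L=\mathcal{O}_S(\alpha A+\beta C)$ is designed so that any line $\ell$ on which $\mathcal{E}$ jumps badly, or any extremal curve $\gamma\subset X$ with $-K_X\cdot\gamma<0$, would force, via $\phi$, a rational curve on $S$, contradicting Theorem \ref{mor}(4). Cleanly extracting this contradiction and handling the boundary cases (lines lying on $S$, lines tangent to $S$, lines meeting $C$) will be the substantive analytic content of the proof.
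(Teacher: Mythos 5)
Your proposal does not prove the statement in question. Theorem~\ref{mor} asserts the \emph{existence} of the pair $(S,C)$ --- a smooth quartic K3 surface $S\subset\mathbb{P}^3$ together with a nonsingular elliptic curve $C$ and a polarization $A$ realizing the prescribed rank-two Picard lattice, with $C$ base point free and $S$ containing no rational curve. Your argument instead \emph{assumes} this pair ``supplied by Theorem~\ref{mor}'' and goes on to build a rank-2 bundle with $c_1=0$, $c_2=3$ by elementary transformation; that is a sketch of the proof of Proposition~\ref{0-3} (the downstream application), not of Theorem~\ref{mor} itself. Nothing in your text addresses why such an $S$, $C$, $A$ exist, so as a proof of the stated theorem it is vacuous. (In the paper this statement is not proved either --- it is quoted from Mori's work --- but the task was to supply an argument for it, and the proposal supplies an argument for a different proposition.)

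For the record, an actual proof runs along entirely different lines: by the surjectivity of the period map one produces a K3 surface whose Picard lattice is exactly $\mathbb{Z}A\oplus\mathbb{Z}C$ with Gram matrix $\left(\begin{smallmatrix}4&7\\7&0\end{smallmatrix}\right)$; one checks that the quadratic form $D^2=4x^2+14xy$ for $D=xA+yC$ never takes the value $-2$ (indeed $2x^2+7xy=-1$ forces $x=\pm1$ and $7y=\mp3$, which has no integer solution), so $S$ carries no $(-2)$-classes, hence no rational curves, and moreover $A$ is very ample (embedding $S$ as a smooth quartic) and $C$ is nef with $C^2=0$, so $|C|$ is a base point free elliptic pencil whose general member is a nonsingular elliptic curve. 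None of these lattice-theoretic and period-map ingredients appear in your proposal, which is the essential gap.
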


\begin{proof}[Proof of Proposition \ref{0-3}]
Let $(S,C)$ be a pair in Theorem $\ref{mor}$. 
Using the theory of elementary transformation \cite{mar1} and \cite{mar2}, we can construct a $\mathrm{rank}$ $2$ regular vector bundle $\mathcal{F}$ on $\mathbb{P}^3$
where $c_1(\mathcal{E})=S,\;c_2(\mathcal{E})=C$ modulo numerical equivalence.
We will prove that $\mathcal{E}:=\mathcal{F}(-2)$ is the bundle what we want.
Since $\mathcal{F}$ has a global sections, we have a following exact sequence 
\[0 \rightarrow \mathcal{O}_{\mathbb{P}^3} \rightarrow \mathcal{F} \rightarrow \mathcal{I}_C(4) \rightarrow 0.\] 
Twist by $\mathcal{O}_{\mathbb{P}^3}(-2)$, we obtain 
\[0 \rightarrow \mathcal{O}_{\mathbb{P}^3}(-2) \rightarrow \mathcal{F}(-2) \rightarrow \mathcal{I}_C(2) \rightarrow 0.\] 
Because $C$ is not contained in any quadric surface, we see that $H^0(\mathcal{I}_C(2))=0$. 
Therefore $\mathcal{F}$ is stable since $H^0(\mathcal{F}(-2))=0$ and $c_1(\mathcal{F}(-2))=0$. 
Next we show $\mathcal{F}$ is nef. 
Note that $\mathcal{F}$ has 2 global sections which induce the generically surjective morphism $\varphi : \mathcal{O}^{\bigoplus 2} \rightarrow \mathcal{F}$ where $\varphi$ is isomorphic outside $S$ by the construction. 
Consequently $\mathcal{F}$ is nef over curves not contained in $S$. 
Over $S$, we get an exact sequence 
$0 \rightarrow \mathcal{O}_S(C) \rightarrow \mathcal{F}| _S \rightarrow \mathcal{O}_S(4A-C) \rightarrow 0$. From the choice of $C$, $\mathcal{O}_S(C)$ is nef. 
We have only to check the nefness of $\mathcal{O}_S(4A-C)$. 
Since $(aA+bC)(4A-C)=9a+28b$, we must prove that $9a+28b\geq 0$ if $aA+bC$ is effective.
But this is true since $(28A-9C)^2=-17<0$ and in view of Kleiman-Mori cone. 
Therefore $-K_X=2\xi _{\mathcal{F}}$ is nef and big.
Namely $\mathcal{F}$ is almost Fano. 
Hence $\mathcal{E}=\mathcal{F}(-2)$ is a stable 2-bundle with $c_1=0,\;c_2=3$ which is almost Fano. 
\end{proof}

Let $\mathcal{M}(0,3)$ be the moduli space of stable rank 2 vector bundles on $\mathbb{P}^2$ with $c_1=0$ and $c_2=3$.
From Theorem in \cite{e-s}, we see that $\mathcal{M}(0,3)$ has two irreducible components $\mathcal{M}_0(0,3)$ and  $\mathcal{M}_1(0,3)$ where 
$\mathcal{M}_{\alpha}(0,3)$ is the moduli space of vector bundles $\mathcal{E}$ satisfying the condition $\mathrm{dim}H^1(\mathcal{E}(-2))\equiv \alpha$ $(mod\;2)$. 
The dimension of each components are 21. 
Almost Fano bundles constructed in Proposition \ref{0-3} are contained in $\mathcal{M}_0(0,3)$. The author does not know whether $\mathcal{M}_1(0,3)$ contains almost Fano bundles or not.

Next we show that each components contain the member which is not almost Fano.
\begin{example}
From Proposition in \cite{rao}, we see that vector bundles in $\mathcal{M}_0(0,3)$ which have a maximal order jumping line is of dimension 20. 
Such a bundle $\mathcal{E}$ is decomposed into $\mathcal{O}_L(3)\oplus \mathcal{O}_L(-3)$ over some line $L$. 
These bundles cannot be almost Fano since $\mathcal{E}(2)$ is not nef.
\end{example}
\begin{example}
Let $Y$ be a disjoint union of a plane cubic and a nonsingular space elliptic curve in $\mathbb{P}^3$. 
By Serre construction, we can construct a $\mathrm{rank}$ 2 bundle $\mathcal{F}$ on $\mathbb{P}^3$ with $c_1=4$, $c_2=7$. 
Then, we can check $H^0(\mathcal{F}(-2))=0$ due to the exact sequence $0 \rightarrow \mathcal{O}_{\mathbb{P}^3} \rightarrow \mathcal{F} \rightarrow \mathcal{I}_Y(4) \rightarrow 0.$
Hence $\mathcal{F}$ is stable. 
Since every nonsingular space elliptic curve is a complete intersection of two quadrics, we have $H^0(\mathcal{I}_Y(3))=H^0(\mathcal{F}(-1))\not=0$.
From easy computation, $(\xi _{\mathcal{F}}-H).(-K_{\mathbb{P}(\mathcal{F})})^3=-1.$ 
Thus $\mathcal{E}:=\mathcal{F}(-2)$ is a stable vector bundle with $c_1=0$, $c_2=3$ which is not almost Fano. 
We can check 
\[\mathrm{dim}H^1(E(-2))=\mathrm{dim}H^1(\mathcal{I}_Y)=\mathrm{dim}H^0(\mathcal{O}_Y)-1=1\] 
using the exact sequence $0 \rightarrow \mathcal{I}_Y \rightarrow \mathcal{O}_{\mathbb{P}^3} \rightarrow \mathcal{O}_Y \rightarrow 0$. Hence $\mathcal{E}$ is contained in $\mathcal{M}_1(0,3)$.
\end{example}
%%%%%%%%%%%%%%%%%%%%%%%%%%%%%%%%%%%%%%%%%%%%%%%%%%%%%%%%%%%%%%%%%%%%%%%%%%
\begin{case3}
\end{case3}
Finally, we consider the case where $n=2$. 
The case when $c_1=-1$ was completely classified in \cite[Theorem 3.2]{lan}. 
So we may only study bundles with $c_1=0$. 
The statement is as follows.
\begin{proposition}
Let $\mathcal{E}$ be a $\mathrm{rank}$ 2 almost Fano bundle on $\mathbb{P}^2$ with $c_1=0$. 
Then, $\mathcal{E}$ is isomorphic to one of the folowing

$(1)$ $\mathcal{O}_{\mathbb{P}^2}(1)\bigoplus \mathcal{O}_{\mathbb{P}^2}(-1)$,

$(2)$ $\mathcal{O}_{\mathbb{P}^2}\bigoplus \mathcal{O}_{\mathbb{P}^2}$,

$(3)$ $\mathcal{E}$ is determined by $0 \rightarrow  \mathcal{O}_{\mathbb{P}^2} \rightarrow \mathcal{E} \rightarrow \mathcal{I}_p \rightarrow 0$, where $\mathcal{I}_p$ is the ideal sheaf of a point,

$(4)$ stable vector bundle with $2 \leqslant c_2 \leqslant 6$.
\end{proposition}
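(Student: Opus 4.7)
Set $X := \mathbb{P}_{\mathbb{P}^2}(\mathcal{E})$, so $-K_X = 2\xi_{\mathcal{E}} + 3H$. The plan is to stratify $\mathcal{E}$ by the vanishing of $H^0(\mathcal{E}(-k))$ for $k = 0, 1$ and dispose of the residual stable case by intersection numbers. Two inputs drive the argument. First, the Chern relation $\xi_{\mathcal{E}}^2 = -c_2 H^2$ together with $H^3 = 0$ and $\xi_{\mathcal{E}} \cdot H^2 = 1$ gives $(-K_X)^3 = 54 - 8c_2$, so bigness of $-K_X$ forces $c_2 \leq 6$. Second, for every line $L \subset \mathbb{P}^2$ with splitting type $\mathcal{E}|_L \cong \mathcal{O}_L(a) \oplus \mathcal{O}_L(-a)$, the minimal section $\sigma$ of $\pi^{-1}(L) \to L$ satisfies $\xi_{\mathcal{E}} \cdot \sigma = -a$ and $H \cdot \sigma = 1$, so nefness of $-K_X$ yields the jumping bound $a \leq 1$ on every line.

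With these in hand I would run a three-case analysis. If $H^0(\mathcal{E}(-1)) \neq 0$, saturating the corresponding morphism $\mathcal{O}_{\mathbb{P}^2}(1) \to \mathcal{E}$ produces $0 \to \mathcal{O}_{\mathbb{P}^2}(d) \to \mathcal{E} \to \mathcal{I}_Z(-d) \to 0$ with $d \geq 1$. A line $L$ disjoint from $Z$ has $\mathcal{E}|_L \cong \mathcal{O}_L(d) \oplus \mathcal{O}_L(-d)$, so the jumping bound forces $d = 1$; and if $Z$ were non-empty, a line through a point of $Z$ would upgrade the saturated sub on $L$ to $\mathcal{O}_L(1 + |Z \cap L|)$ and give $a \geq 2$, a contradiction. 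Hence $Z = \emptyset$, and since $\mathrm{Ext}^1(\mathcal{O}(-1), \mathcal{O}(1)) = H^1(\mathcal{O}_{\mathbb{P}^2}(2)) = 0$ the sequence splits into case $(1)$. If $H^0(\mathcal{E}(-1)) = 0$ but $H^0(\mathcal{E}) \neq 0$, then any nonzero section is automatically not divisible by a divisor, giving $0 \to \mathcal{O} \to \mathcal{E} \to \mathcal{I}_Z \to 0$ with $|Z| = c_2$; the same restriction argument forces $|Z \cap L| \leq 1$ for every line, hence $c_2 \leq 1$. This yields case $(2)$ when $c_2 = 0$ (the extension splits since $\mathrm{Ext}^1(\mathcal{O}, \mathcal{O}) = 0$) and case $(3)$ when $c_2 = 1$. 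Finally, if $H^0(\mathcal{E}) = 0$ then $\mathcal{E}$ is stable; a Riemann--Roch computation on $\mathbb{P}^2$ gives $\chi(\mathcal{E}) = 2 - c_2$ together with $h^0(\mathcal{E}) = h^2(\mathcal{E}) = h^0(\mathcal{E}(-3)) = 0$ by stability, forcing $c_2 \geq 2$, so combined with $c_2 \leq 6$ we land in case $(4)$.

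The delicate step is the identification, in the two non-stable cases, of the failure of saturation of the sub line bundle along $L$ with the scheme-theoretic intersection number $|Z \cap L|$. This is read off from the Tor sequence of $\mathcal{I}_Z \otimes \mathcal{O}_L$: since $\mathcal{E}|_L$ is locally free, the map $\mathcal{E}|_L \to \mathcal{I}_Z(-d)|_L$ factors through the torsion-free quotient $\mathcal{O}_L(-d - |Z \cap L|)$, and a degree count recovers the saturation $\mathcal{O}_L(d + |Z \cap L|)$ so that the jumping number on $L$ is exactly $a = d + |Z \cap L|$. Once this formula is in hand the entire argument is an orchestration of the jumping bound $a \leq 1$; everything else reduces to standard Chern-class bookkeeping and elementary cohomological vanishing on $\mathbb{P}^2$.
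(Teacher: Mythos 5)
Your proof is correct and follows essentially the same route as the paper's: stratify by the vanishing of $H^0(\mathcal{E}(-1))$ and $H^0(\mathcal{E})$, bound the splitting type of $\mathcal{E}$ on lines to dispose of the non-stable cases, and use $(-K_X)^3=54-8c_2>0$ together with Riemann--Roch in the stable case. The only difference is cosmetic: you extract the jumping bound $a\leqslant 1$ from nefness of $-K_X$ on minimal sections over lines, whereas the paper restricts the (asserted ample) twist $\mathcal{E}(2)$ to lines; your version is if anything slightly more robust, since ampleness of $\mathcal{E}(2)$ for a merely almost Fano bundle requires a word of justification that nefness of $-K_X$ does not.
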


\begin{proof}
In this case, $\mathcal{E}(2)$ is ample. 
If $H^0(\mathcal{E}(-1))\neq0$, we take a non-zero section $s \in H^0(\mathcal{E}(-1))$. 
If $Z:=\{s=0\}= \emptyset$, then $\mathcal{E}$ is decomposed into a direct sum of line bundles. 
If $Z\not=\emptyset$, then for a line $L$ meeting $Z$ in a finite number of points we would have
\[\mathcal{E}(-1)|_L\cong \mathcal{O}_L(d)\oplus \mathcal{O}_L(-2-d),\;d\geqslant1.\]
This contradict to the ampleness of $\mathcal{E}(2)$.

If $H^0(\mathcal{E}(-1))=0$ and $H^0(\mathcal{E})\not=0$, take a non-zero section $s \in H^0(\mathcal{E})$. 
If $Z:=\{s=0\}=\emptyset$, then $\mathcal{E}$ is decomposed into a direct sum of line bundles. 
If $Z\not= \emptyset$ and $\mathrm{deg}Z\geq 2$, then for a line $L$ intersecting with $Z$ at least two points we would have 
\[\mathcal{E}| _L\cong \mathcal{O}_L(d)\oplus \mathcal{O}_L(-d),\;d\geq 2.\]
This is a contradiction. 

If $\mathrm{deg}Z=1$, $\mathcal{E}$ has an exact sequence $0 \rightarrow \mathcal{O}_{\mathbb{P}^2} \rightarrow \mathcal{E} \rightarrow \mathcal{I}_p \rightarrow 0$, where $\mathcal{I}_p$ is the ideal sheaf of a point $p$. 
In this case $\mathcal{E}$ is Fano bundle by \cite[Proposition 2.3]{s-w2}.
Finally we consider the case $H^0(\mathcal{E})=0$ i.e. $\mathcal{E}$ is stable. 
Then $2\leq c_2\leq 6$ since $(-K_X)^3=54-8c_2>0$.
\end{proof}

We have some comments of Fano bundles with $c_1=0$. 
If $c_2=2$, all stable bundles are Fano bundle from \cite{s-w2}. 
In the situation $c_2=3$, there is a stable Fano bundles by \cite{s-w2}. 
Moreover, we have the following result.

\begin{proposition}\label{0-3-2}
If $\mathcal{E}$ is a stable almost Fano bundle on a projective plane with $c_1=0$, $c_2=3$.
Then $\mathcal{E}$ is Fano bundle.
\end{proposition}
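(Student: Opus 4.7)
Suppose for contradiction that $\mathcal{E}$ is a stable almost Fano bundle on $\mathbb{P}^2$ with $c_1=0$, $c_2=3$ that is not Fano. Then $-K_X=2\xi_{\mathcal{E}}+3H$ is nef and big but not ample, so there is an irreducible curve $C\subset X=\mathbb{P}(\mathcal{E})$ with $-K_X\cdot C=0$. The equation forces $H\cdot C=2k$ and $\xi_{\mathcal{E}}\cdot C=-3k$ for some integer $k\geq 1$. Since $-K_X\cdot F=2$ for every $\pi$-fibre $F$, $C$ is not $\pi$-vertical, so $B:=\pi(C)$ is a curve in $\mathbb{P}^2$; by the cone theorem for the almost Fano variety $X$ (Theorem~\ref{thm}), $C$ can be taken rational, so $B$ is rational as well.

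The key calculation takes place on the ruled surface $Y:=\pi^{-1}(B)$. Assume first that $B$ is smooth, so $B\cong\mathbb{P}^1$ and $d:=\deg B\in\{1,2\}$, and write $\mathcal{E}|_B\cong\mathcal{O}(a)\oplus\mathcal{O}(-a)$ so that $Y\cong\mathbb{F}_{2a}$ with negative section $\sigma_-$ and fibre $f$. Then $\xi_{\mathcal{E}}|_Y=\sigma_-+af$ and $H|_Y=df$, giving $-K_X|_Y=2\sigma_-+(2a+3d)f$. For an effective irreducible class $x\sigma_-+yf$ on $Y$ with $x\geq 1$, $y\geq 0$, we have $-K_X\cdot C=(3d-2a)x+2y$, which vanishes only if $a\geq 3d/2$.

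For $d=1$, nefness of $-K_X$ applied to $\sigma_-$ itself forces $3-2a\geq 0$, hence $a\leq 1<3/2$, ruling out any line. The decisive case is $d=2$, which requires $a\geq 3$. To rule this out I use the restriction sequence $0\to\mathcal{E}(-2)\to\mathcal{E}\to\mathcal{E}|_B\to 0$. Riemann-Roch together with the stability of $\mathcal{E}$ gives $h^0(\mathcal{E})=h^2(\mathcal{E})=0$, $h^1(\mathcal{E})=1$, $h^0(\mathcal{E}(-2))=h^2(\mathcal{E}(-2))=0$, and $h^1(\mathcal{E}(-2))=3$, so the long exact sequence forces $h^0(\mathcal{E}|_B)\leq h^1(\mathcal{E}(-2))=3$. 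But $\mathcal{E}|_B\cong\mathcal{O}(a)\oplus\mathcal{O}(-a)$ with $a\geq 3$ has $h^0=a+1\geq 4$, a contradiction.

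Reducible or non-reduced conics reduce at once to the line case, since any irreducible curve in $\pi^{-1}(L_1\cup L_2)$ must lie in one of the components $\pi^{-1}(L_i)$. When $B$ is singular rational of degree $d\geq 3$, one passes to the normalisation $\tilde B\cong\mathbb{P}^1$ and runs the same ruled-surface computation for the pulled-back bundle $\tilde f^*\mathcal{E}$, combined with the analogous Riemann-Roch-stability bound on $h^1(\mathcal{E}(-d))$ coming from $0\to\mathcal{E}(-d)\to\mathcal{E}\to\mathcal{E}|_B\to 0$; the extreme splitting of $\tilde f^*\mathcal{E}$ forced by $a\geq 3d/2$ is then incompatible with this bound. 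The genuinely delicate step is the smooth conic case, where the explicit identity $h^1(\mathcal{E}(-2))=3$ is what makes the argument work; the remaining cases are essentially numerical consequences of the same strategy.
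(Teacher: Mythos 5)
Your strategy is genuinely different from the paper's. The paper invokes Szurek--Wi\'sniewski (Claims 2.7, 2.10, 2.11 of their paper on Fano bundles on surfaces) to produce a one-dimensional base locus $B$ of $|\xi_{\mathcal{E}}+H|$ with $H.B\leqslant 2$ and $(\xi_{\mathcal{E}}+H).B\leqslant -1$, concludes $H.B=2$ and $(\xi_{\mathcal{E}}+H).B=-1$ from nefness of $-K_X$ (the analogue of your $a\geqslant 3d/2$), and then only has to treat a line, two lines, and a smooth conic, killing the conic case by playing the splitting $\mathcal{E}(1)|_C\cong\mathcal{O}_C(d)\oplus\mathcal{O}_C(4-d)$, $d\geqslant 5$, against the length-$4$ zero scheme $Z$ of a section of $\mathcal{E}(1)$. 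You instead take an extremal rational curve $C$ with $-K_X.C=0$ and do the numerics on the ruled surface over $\pi(C)$. Your line and smooth-conic cases are correct and self-contained; in particular the computation $h^1(\mathcal{E}(-2))=3$ against $h^0(\mathcal{O}(a)\oplus\mathcal{O}(-a))=a+1\geqslant 4$ is a clean substitute for the paper's argument with $Z$. (The sentence about reducible or non-reduced conics is vacuous: $B=\pi(C)$ is irreducible because $C$ is.)

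The genuine gap is the case $d=\deg\pi(C)\geqslant 3$, i.e.\ an irreducible singular rational image curve, which you only assert. Nothing in your argument bounds $d$ a priori --- that bound is exactly what the paper imports from Szurek--Wi\'sniewski --- and your sketch does not close it. The sequence $0\rightarrow\mathcal{E}(-d)\rightarrow\mathcal{E}\rightarrow\mathcal{E}|_B\rightarrow 0$ controls $h^0(\mathcal{E}|_B)$ on the \emph{singular} curve, while the splitting type $a$ lives on the normalization $\nu:\tilde B\rightarrow B$; since $\mathcal{E}|_B\hookrightarrow\nu_*\nu^*\mathcal{E}$ has cokernel of length $2\delta=(d-1)(d-2)$, all you get is $a+1\leqslant h^1(\mathcal{E}(-d))+(d-1)(d-2)$. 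Combining your $a\geqslant 3d/2$ with nefness of $-K_X$ on the image of $\sigma_-$ (which gives $3d-2a\geqslant 0$, hence $a=3d/2$ and $d$ even) disposes of $d=3$, but for $d=4$ the inequality reduces to needing $h^1(\mathcal{E}(1))=0$, which you have not established, and for $d\geqslant 6$ the quadratic correction term swamps the cohomological bound entirely. To complete the proof along your lines you must either prove an a priori bound $H.C\leqslant 2$ on the trivializing extremal curve (the content of the cited Claims 2.10--2.11) or find an independent argument excluding high-degree singular rational images.
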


\begin{proof}
Let $\mathcal{E}$ be a stable almost Fano bundle on a projective plane with $c_1=0$, $c_2=3$. 
Using Riemann-Roch theorem, we have $\mathrm{dim}H^0(\mathcal{E}(1))>0$. 
Therefore we get an exact sequence $0\rightarrow \mathcal{O}_{\mathbb{P}^2}\rightarrow \mathcal{E}(1) \rightarrow \mathcal{I}_Z(2) \rightarrow 0$ where $Z$ is 4 points in $\mathbb{P}^2$ and $\mathcal{I}_Z$ is the ideal sheaf of $Z$.
If $\mathcal{E}$ is not Fano, then the linear system $|\xi_{\mathcal{E}}+H|$ has one dimensional base locus $B$ by \cite{s-w2}, Claim 2.7. 
By virtue of Claim 2.10 and 2.11 in \cite{s-w2},  we have $H.B\leqslant2$ and $(\xi_{\mathcal{E}}+H).B\leqslant-1$. 
Since $0\leqslant-K_{\mathbb{P}_{\mathbb{P}^2}(\mathcal{E})}.B=2(\xi_{\mathcal{E}}+H).B+H.B\leqslant0$, we obtain $H.B=2$ and $(\xi_{\mathcal{E}}+H).B=-1$. 
If $\pi(B)$ is a line $L$, we have $\mathcal{E}(1)|_L\cong \mathcal{O}(d)\oplus \mathcal{O}(2-d)$, $d\geqslant3$. 
This contradicts the ampleness of $\mathcal{E}(2)$. 
If $\pi(B)$ is a two line, take a irreducible component $L$. In this case $\mathcal{E}(1)$ is not nef over  $\pi(B)$, so over $L$. 
Therefore we have $\mathcal{E}(1)|_L\cong \mathcal{O}(d)\oplus \mathcal{O}(2-d)$, $d\geqslant3$. 
This contradicts the ampleness of $\mathcal{E}(2)$. 
Finally we consider the case where $\pi(B)$ is nonsingular conic $C$. 
Since $(\xi_{\mathcal{E}}+H).B=-1$, we obtain the splitting $\mathcal{E}(1)|_C\cong \mathcal{O}_C(d)\oplus \mathcal{O}_C(4-d), d\geqslant5$. 
This is impossible because $Z$ is only 4 points.
\end{proof}

\begin{corollary}
Let  $\mathcal{E}$ be a stable vector bundle on a projective plane with $c_1=0$, $c_2=3$. If $\mathcal{S}^2(\mathcal{E})(3)$ is nef, then $\mathcal{E}(1)$ is generated by global sections.
\end{corollary}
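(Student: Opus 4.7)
The plan is to use the hypothesis $\mathcal{S}^2(\mathcal{E})(3)$ nef, restricted to lines, to bound the splitting type of $\mathcal{E}$, and then analyze a generic section of $\mathcal{E}(1)$ via the standard Serre-type sequence. For any line $L \subset \mathbb{P}^2$, write $\mathcal{E}|_L \cong \mathcal{O}_L(a) \oplus \mathcal{O}_L(-a)$ with $a \geqslant 0$; then
\[
\mathcal{S}^2(\mathcal{E})(3)|_L \cong \mathcal{O}_L(2a+3) \oplus \mathcal{O}_L(3) \oplus \mathcal{O}_L(3-2a),
\]
so nefness of this restriction forces $a \leqslant 1$. Hence $\mathcal{E}|_L$ has splitting type $(0,0)$ or $(1,-1)$ for every line $L$.

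Next, Riemann--Roch (together with $h^2(\mathcal{E}(1)) = 0$, which follows from stability via Serre duality) gives $h^0(\mathcal{E}(1)) \geqslant \chi(\mathcal{E}(1)) = 3$. The zero locus $Z$ of any nonzero section $s \in H^0(\mathcal{E}(1))$ is $0$-dimensional of length $c_2(\mathcal{E}(1)) = 4$ (stability excludes a divisorial component), giving the sequence
\[
0 \to \mathcal{O}_{\mathbb{P}^2} \to \mathcal{E}(1) \to \mathcal{I}_Z(2) \to 0.
\]
Restricting $s$ to a line $L$, the induced nonzero section of $\mathcal{E}(1)|_L \cong \mathcal{O}_L(a+1) \oplus \mathcal{O}_L(1-a)$ saturates to a sub-line-bundle of degree $\mathrm{length}(Z \cap L) \leqslant a+1 \leqslant 2$. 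In particular, no line contains a length-$3$ subscheme of $Z$.

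Choosing $s$ generically so that $Z$ is reduced, $Z$ consists of four points with no three collinear. Then $|\mathcal{I}_Z(2)|$ is a pencil of conics whose scheme-theoretic base locus is exactly $Z$: for $p \notin Z$ some pencil member avoids $p$, and at each $p \in Z$ the three reducible members (one per partition of $Z$ into two disjoint pairs of lines) provide distinct tangent directions at $p$, whose gradients span the fibre $\mathcal{I}_Z(2)|_p \cong T^\ast_p \mathbb{P}^2$. Hence $\mathcal{I}_Z(2)$ is globally generated. The exact sequence then delivers global generation of $\mathcal{E}(1)$: the map $H^0(\mathcal{E}(1)) \twoheadrightarrow H^0(\mathcal{I}_Z(2))$ is surjective since $H^1(\mathcal{O}_{\mathbb{P}^2}) = 0$; away from $Z$ the section $s$ supplies one direction of $\mathcal{E}(1)|_p$ while a lift of a pencil member nonvanishing at $p$ supplies the second; at $p \in Z$ the quotient map $\mathcal{E}(1)|_p \to \mathcal{I}_Z(2)|_p$ is an isomorphism and surjectivity transfers directly.

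The main technical subtlety is the genericity reduction — ensuring that at least one section of $\mathcal{E}(1)$ has reduced zero locus. This should follow from a Bertini-type argument since $h^0(\mathcal{E}(1))$ exceeds the rank; if needed, the pencil-of-conics argument admits a scheme-theoretic analogue using the uniform bound $\mathrm{length}(Z \cap L) \leqslant 2$, which is the geometric content of the hypothesis $\mathcal{S}^2(\mathcal{E})(3)$ nef.
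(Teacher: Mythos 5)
Your argument is a genuinely different route from the paper's. The paper observes that nefness of $\mathcal{S}^2(\mathcal{E})(3)$ is exactly nefness of $-K_X=2\xi_{\mathcal{E}}+3H$, that $(-K_X)^3=54-8c_2=30>0$, so $\mathcal{E}$ is almost Fano; it then invokes the preceding Proposition (a stable almost Fano bundle with $c_1=0$, $c_2=3$ is already Fano, proved by excluding a one-dimensional base locus of $|\xi_{\mathcal{E}}+H|$) and quotes Szurek--Wi\'sniewski for global generation of $\mathcal{E}(1)$. You instead extract only the restriction-to-lines consequence of the hypothesis (splitting type $a\leqslant1$, hence $\mathrm{length}(Z\cap L)\leqslant2$ for the zero scheme $Z$ of any section of $\mathcal{E}(1)$) and run a self-contained Serre-sequence and pencil-of-conics argument. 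Your numerics are right ($\chi(\mathcal{E}(1))=3$, $h^2=0$ by stability, $c_2(\mathcal{E}(1))=4$, $Z$ zero-dimensional by stability), and the route is attractive because it avoids the Fano machinery entirely.

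There is, however, one real gap, which you yourself flag: the reduction to a \emph{reduced} $Z$. The ``Bertini-type argument since $h^0(\mathcal{E}(1))$ exceeds the rank'' is not valid --- Bertini for zero loci of sections of a vector bundle requires global generation, which is precisely the conclusion; the inequality $h^0>\mathrm{rank}$ by itself gives nothing about a general zero scheme. Fortunately the repair is cleaner than your closing remark suggests, and no genericity is needed at all: for \emph{any} nonzero section, $Z$ is a length-$4$ local complete intersection with $\mathrm{length}(Z\cap L)\leqslant2$ for every line $L$. Splitting off a line meeting $Z$ in length $2$ gives $h^1(\mathcal{I}_Z(2))=0$, so $h^0(\mathcal{I}_Z(2))=2$; the pencil can have no fixed line $L$, since otherwise the residual scheme $Z:L$, of length $\geqslant2$, would have to lie on a two-dimensional family of lines. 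Hence two members of the pencil meet properly in a length-$4$ scheme containing $Z$, so $Z$ is the complete intersection of two conics, and $\mathcal{I}_Z(2)$ is globally generated by the Koszul resolution $\mathcal{O}(-2)^{\oplus2}\rightarrow\mathcal{I}_Z\rightarrow0$. With that substitution (replacing both the genericity step and the tangent-direction computation at the four points), your proof is complete and independent of the paper's.
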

\begin{proof}
If $\mathcal{S}^2(\mathcal{E})(3)$ is nef, then $\mathcal{E}$ is almost Fano. 
From Proposition \ref{0-3-2}, $\mathcal{E}$ is Fano bundle. 
By means of Proposition 2.6 in \cite{s-w2}, $\mathcal{E}(1)$ is generated by global sections.
\end{proof}

When $c_2=4$, no stable 2-bundle is Fano \cite{s-w2}. 
We can constract almost Fano 2 bundle with $c_1=0, c_2=4$ as follows. 

\begin{example}
Let $Y$ be 5 points in general position and $C$ is a smooth conic containing $Y$. 
Then the pair $(C,Y)$ yields us a rank 2 regular vector bundle $\mathcal{F}$ with $c_1=C$, $c_2=Y$ by virtue of an elementary transform by \cite{mar1} and \cite{mar2}. 
We have a following exact sequence 
\[0 \rightarrow \mathcal{O}_{\mathbb{P}^2} \rightarrow \mathcal{F} \rightarrow \mathcal{I}_Y(2) \rightarrow 0\]
where $\mathcal{I}_Y$ is the ideal sheaf of $Y$. 
Twist by $\mathcal{O}_{\mathbb{P}^2}(-1)$, we get 
\[0 \rightarrow \mathcal{O}_{\mathbb{P}^2}(-1) \rightarrow \mathcal{F}(-1) \rightarrow \mathcal{I}_Y(1) \rightarrow 0.\] 
Because there is no line containing $Y$, we have $H^0(\mathcal{I}_Y(1))=0$. 
Therefore $\mathcal{F}$ is stable since $H^0(\mathcal{F}(-1))=0$ and $c_1(\mathcal{F}(-1))=0$. 
We check $-K_{\mathbb{P}_{\mathbb{P}^2}(\mathcal{F})}=2\xi_{\mathcal{F}}+H$ is nef. 
First we remark that $\mathcal{F}$ has 2 global sections which induce the generically surjective morphism $\varphi : \mathcal{O}^{\bigoplus 2} \rightarrow \mathcal{F}$ where $\varphi$ is isomorphic outside $C$ by the construction. 
Hence we notice that $2\xi_{\mathcal{F}}+H$ is nef outside $\pi^{-1}(C)$. 
On $C$, we have that $\mathcal{F}| _C\cong \mathcal{O}_C(5)\oplus \mathcal{O}_C(-1)$ from the theory of elementary transformation.  
From this fact, we can check that $(2\xi_{\mathcal{F}}+H).D\geqslant0$ for every curves $D$ contained in Hirzebruch surface  $\mathbb{P}_C(\mathcal{F|_C})$. 
The equality holds only for the minimal section associated with the quotient line bundle $\mathcal{F}|_C \rightarrow \mathcal{O}_C(-1) \rightarrow 0.$
Therefore $-K_{\mathbb{P}_{\mathbb{P}^2}(\mathcal{F})}$ is nef and big. 
Hence $\mathcal{E}:=\mathcal{F}(-1)$ is a stable almost Fano bundle with $c_2=4$. 
\end{example}

There exists an almost Fano stable bundle with $c_1=0$, $c_2=5$ from Theorem 0.19(C) in \cite{taka}. 
Finally we construct stable vector bundles with $c_1=0$, $3\leqslant c_2\leqslant6$ which are not almost Fano.

\begin{example}
Let $Y_k=\{p_0, p_1, \cdots, p_k\}$ be the $k+1$ points $(4\leqslant k\leqslant7)$ in $\mathbb{P}^2$. 
We assume that $p_0$, $p_1$, $p_2$ are lying in a line $L$ and other points are not on $L$.
By Serre construction, we have rank 2 vector bundles $\mathcal{E}_k$ on $\mathbb{P}^2$ with $c_1=2$, $c_2=k+1$. 
$\mathcal{E}_k$ has an exact sequence $0 \rightarrow \mathcal{O}_{\mathbb{P}^2} \rightarrow \mathcal{E}_k \rightarrow \mathcal{I}_{Y_k}(2) \rightarrow 0.$
Becuase there is no line containing $Y_k$, we see $\mathrm{dim}H^0(\mathcal{E}_k(-1))=\mathrm{dim}H^0(\mathcal{I}_{Y_k})$=0. 
Combining with $c_1(\mathcal{E}_k(-1))=0$, $\mathcal{E}_k$ is stable bundle. 
Restricting each bundles into $L$, we get $\mathcal{E}_k|_L\cong \mathcal{O}_L(3)\oplus \mathcal{O}_L(-1)$. 
Since $\mathcal{E}_k(1)$ is not ample, $\mathcal{E}_k$ is not almost Fano.
\end{example}


\begin{thebibliography}{99}

\bibitem{a-p-w}
\textsc{V. Ancona, T. Peternell and J.A. Wi$\mathrm{\acute{s}}$niewski},
Fano bundles and splitting theorems on projective spaces and quadrics, Pacific J. Math. 163 (1994), no.1, 17--42.

\bibitem{c-j-r}
\textsc{C. Casagrande, P. Jahnke and I. Radloff},
On the Picard number of almost Fano threefolds with pseudo-index$>1$, Internat. J. Math. 19 (2008), no.2, 173--191.

\bibitem{dem}
\textsc{M. Demazure},
Surfaces de del Pezzo I-V, 21--69, Lecture Notes in Math., 777, Springer, Berlin, 1980. 

\bibitem{e-s}
\textsc{G. Ellingsrud and S. A. Stromme},
Stable rank 2 vector bundles on $\mathbb{P}^3$ with $c_1=0$ and $c_2=3$, Math. Ann. 255 (1981), no. 1, 123--135.

\bibitem{f-g}
\textsc{O. Fujino and Y. Gongyo},
On images of weak Fano manifolds. preprint

\bibitem{har}
\textsc{R. Hartshorne},
Stable vector bundles of $\mathrm{rank}$ $2$ on $\mathbb{P}^3$, Math. Ann. 238 (1978), no.3, 229--280.

\bibitem{j-p}
\textsc{P. Jahnke and T. Peternell},
Almost del Pezzo manifolds, Adv. Geom. 8 (2008), 387--411.

\bibitem{j-p-r}
\textsc{P. Jahnke and T. Peternell, I. Radloff},
Threefolds with big and nef anticanonical bundle. I, Math. Ann. 333 (2005), no.3,  569--631.

\bibitem{k-m}
\textsc{J. Koll\'ar and S. Mori},
Birational geometry of algebraic varieties, Cambridge Tracts in Mathematics 134 (Cambridge University Press, MA, 1998).

\bibitem{lan}
\textsc{A. Langer},
Fano 4-folds with scroll structure, Nagoya Math. J. 150 (1998), 135--176.

\bibitem{mar1}
\textsc{M. Maruyama},
On a family of algebraic vector bundles, Number Theory, Algebraic Geomtry and Commutative Algebra (1973), 95--149, Kinokuniya.

\bibitem{mar2}
\textsc{M, Maruyama}, 
Elementary transformations in the theory of algebraic vector bundles. 241--266, Lecture Notes in Math., 961, Springer, Berlin, 1982.

\bibitem{min}
\textsc{T, Minagawa}, 
On classification of weakened Fano 3-fold with $B_2 = 2$, in Proc. of algebraic geometry symposium (Kinosaki, Oct. 2000)

\bibitem{mor}
\textsc{S. Mori},
On degrees and genera of curves on smooth quatic surfaces in  $\mathbb{P}^3$, Nagoya. Math. J. 96 (1984), 127--132.

\bibitem{rao}
\textsc{P. Rao},
Mathematical instantons with maximal order jumping lines, Pacific. J. 178 (1997), no.2, 331--344.

\bibitem{sat}
\textsc{H. Sato},
The classification of smooth toric weakened Fano 3-folds, Manuscripta Math. 109 (2002), no.1, 73--84.

\bibitem{s-w3}
\textsc{M. Szurek and J.A. Wi$\mathrm{\acute{s}}$niewski},
Fano bundles over $\mathbb{P}^3$ and $Q_3$, Pacific J. Math. 141 (1990), 197--208.

\bibitem{s-w2}
\textsc{M. Szurek and J.A. Wi$\mathrm{\acute{s}}$niewski},
Fano bundles of $\mathrm{rank}\;2$ on surfaces, Compositio. Math. 76 (1990), no.1-2, 295--305.

\bibitem{taka}
\textsc{H. Takagi},
On classification of $\mathbb{Q}$ Fano 3-folds of Gorenstein index 2. $I$, $I\hspace{-.1em}I$, Nagoya. Math. 167 (2002), 117--155, 157--216.

\bibitem{take}
\textsc{K. Takeuchi},
Weak Fano threefolds with del Pezzo fibration. preprint

\bibitem{wis}
\textsc{J.A. Wi$\mathrm{\acute{s}}$niewski},
On contractions of extremal rays of Fano manifolds, J. Reine Angew. Math. 417 (1991), 141--157.

\bibitem{z}
\textsc{Q. Zhang},
Rational connectedness of log $\mathbb{Q}$-Fano varieties, J. Reine Angew. Math. 590 (2006), 131--142.

\end{thebibliography}
\end{document}